\documentclass[11pt]{amsart}
\usepackage{amsmath,amsfonts,latexsym,amssymb,amscd, graphicx}

\usepackage{latexsym,enumerate,epsfig,listings}
\usepackage{amsthm,amsopn,amstext}
\usepackage[latin1]{inputenc}
\usepackage{verbatim}
\usepackage{graphicx}
\usepackage{hyperref}
\usepackage[dvipsnames, usenames]{color}
\usepackage{anysize}

\setlength{\oddsidemargin}{0.0in}
\setlength{\evensidemargin}{0.0in}
\setlength{\textwidth}{6.5in}
\setlength{\textheight}{8.5in}
\setlength{\topmargin}{-0.5cm}

\renewcommand{\P}{{\mathbb P}}
\newcommand{\E}{{\mathbb E}}
\newcommand{\R}{{\mathbb R}}

\newcommand{\0}{{\mathbf 0}}

\newcommand{\xx}{{\mathbf x}}

\newcommand{\ee}{{\mathbf e}}

\newcommand{\eps}{{\varepsilon }}

\newcommand{\Var}{{\rm Var}}
\newcommand{\esp}{{ \mathbb E}}

\newcommand{\Cov}{{\rm Cov}}

\newcommand{\eqdi}{\stackrel{d}{=}}

\newtheorem{thm}{Theorem}

\newtheorem{lem}{Lemma}
\newtheorem{prop}{Proposition}
\newtheorem{rem}{Remark}

\numberwithin{equation}{section}

\begin{document}

\title[Argmax of a process: L\'evy, Gaussian and multidimensional cases.]{On the location of the maximum of a process: L\'evy, Gaussian and multidimensional cases.}

\author[Sergio I. L\'opez]{Sergio I. L\'opez*}
\author[Leandro P. R. Pimentel]{Leandro P. R. Pimentel $\dag$ }

\address[*]{Departamento  de Matem\'aticas\\ Facultad de Ciencias, UNAM\\
	C.P. 04510, Distrito Federal, M\'exico}
\address[ $\dag$ ]{Instituto de Matem\'atica\\ Universidade Federal do Rio de Janeiro\\
	Caixa Postal 68530, CEP 21941-909 Rio de Janeiro, RJ, Brasil}

\email[*]{silo@ciencias.unam.mx}
\email[ $\dag$ ]{leandro@im.ufrj.br, lprpimentel@gmail.com}

\thanks{Sergio I. L\'opez was partially funded by the UNAM grant PAPIIT-IA104216}

\date{\today}

\begin{abstract}
In this short article we show how the techniques presented in \cite{Pi} can be extended to a variety of non continuous and multivariate processes. As examples, we prove uniqueness of the location of the maximum for spectrally positive L\'evy processes, Ornstein-Uhlenbeck process, fractional Brownian Motion and the Brownian sheet among others gaussian processes.
\end{abstract}

\maketitle

\section{Introduction and results}

The question regarding uniqueness of the location of the maximum (the maximizer) reached by a functional is relevant in optimization theory, operation research, statistics and other fields. In the deterministic cases, some information can be obtained by dealing with some specific topological structure of the functional: convexity, semicontinuity, monotonicity, etc. In the random case, when one deals with a functional of paths sampled from stochastic processes, those paths seldom satisfy those topological properties and then little information can be obtained by such methods about if there is a unique maximizer or not. \\

The problem of proving absolute continuity of functionals of random processes has been addressed since long time ago in the literature. In the seminal paper of Kim and Pollard \cite{KimPol}, in the context of functional limit theorems and its applications to statistics, it is presented a rather general theorem about the uniqueness of maximizers for gaussian processes (Lemma 2.6 there). That result (in the refined version presented by Arcones \cite{Arc}) essentially states that for a separable gaussian process $\{X_t\}_{t \in T} $, if one puts $\rho(s,t):= \Var(X_t-X_s)$ and $(T,\rho)$ turns out to be a separable metric space, then no sample path will attain its maximum in two different points. \\

In \cite{Pi} Pimentel presented another criterion which shows that the uniqueness of maximizers is equivalent to the existence of some functional derivative. Besides of proving uniqueness, in the cases where one is able to explicitly computing such derivative, it is possible to obtain some covariances formulas relating the maximizer and the maximum of the process. This functional approach goes back to Davydov \cite{Dav}, as cited by Lifshits \cite{Lif} (Lemma 3), where it is proved the equivalence of the absolute continuity of some functional to the existence of the derivative of that functional in a neighborhood along some particular direction. \\

This work can be seen as a continuation of \cite{Pi}. Our contribution is to show that the techniques presented in that article are rather robust and can be adapted to more general examples. First, let us show how the assumption on continuity of the sample paths can be relaxed. \\

 Consider a real c\`{a}dl\`{a}g  function $h$ with supremum value denoted by $S(h)$ and define the set
$$ \mathcal{Z} (h) := \Big\{ z \in K :  \forall \eps>0 ,  \,  \sup_{ x \in [z-\eps, z+ \eps] } h(x) = S(h) \Big\} ,$$ 
which is the set of points which are infinitesimally close to be a maximizer (which will be called quasi-maximizers). We will say that there is a unique quasi-maximizer for $h$, if the set $\mathcal{Z}(h)$ consists of only one point, denoted by $Z$.  We derive a generalization of Theorem 1 in \cite{Pi} for c\`{a}dl\`{a}g  processes, while we also allow the use of non-linear perturbations.

\begin{thm}\label{teo:nl}
	Let $\{ X(z): z \in K\} $ be a c\`{a}dl\`{a}g  process defined on a compact set $K \subseteq \R$ such that its supremum $S$ satisfies $\E|S| < \infty$. For $a$ in $\mathbb{R}$ define
	$$ X^a(z)= X(z) +  a \rho (z) \qquad \forall \, z \in K  ,$$
	where $\rho$ is an strictly increasing continuous real function. Put $S^a =S(X^a) $ and $s(a):= \E S^a$. Then $X$ has a unique quasi-maximizer $Z$ if and only if the derivative of the map $a \mapsto s(a)$ exists at $a=0$. In that case, we have that 
	 $$\E ( \rho(Z) )  =  \frac{d}{da} s(a) \Big|_{a=0}. $$
\end{thm}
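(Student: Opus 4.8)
The plan is to treat the problem pathwise first and then pass to the expectation via convexity and dominated convergence. The starting point is that, for each fixed realization, the map $a \mapsto S^a = \sup_{z\in K}\big(X(z)+a\rho(z)\big)$ is a supremum of affine functions of $a$ (each $z$ contributes the line $a\mapsto X(z)+a\rho(z)$ of slope $\rho(z)$), hence convex in $a$. Since $K$ is compact and $\rho$ is continuous, $\sup_{z\in K}|\rho(z)|=:\|\rho\|_\infty<\infty$, and one checks the a priori bound $|S^a-S|\le |a|\,\|\rho\|_\infty$; together with $\E|S|<\infty$ this gives $\E|S^a|<\infty$, so $s(a)=\E S^a$ is a finite convex function. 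A finite convex function has one-sided derivatives everywhere, and $s'(0)$ exists if and only if $s'_-(0)=s'_+(0)$. The whole theorem therefore reduces to computing these two one-sided derivatives.

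Next I would identify the pathwise one-sided derivatives with the extreme quasi-maximizers. Writing $\mathcal{Z}=\mathcal{Z}(X)$, one first observes that $z\in\mathcal{Z}$ precisely when $z$ is the limit of a sequence $z_n$ with $X(z_n)\to S$, so that $\mathcal{Z}$ is closed and nonempty, and one may set $Z_-=\min\mathcal{Z}$ and $Z_+=\max\mathcal{Z}$. The claim to establish is that, pathwise,
$$ \lim_{a\downarrow 0}\frac{S^a-S}{a}=\rho(Z_+),\qquad \lim_{a\uparrow 0}\frac{S^a-S}{a}=\rho(Z_-). $$
For the upper bound in the first limit, pick for each $a>0$ a near-maximizer $z_a$ of $X^a$ (within $a^2$ of $S^a$); then $S^a\le S+a\rho(z_a)+a^2$, and a compactness argument shows that along any subsequence $z_a\to z^*$ one has $X(z_a)\to S$, hence $z^*\in\mathcal{Z}$ and $z^*\le Z_+$, giving $\limsup_{a\downarrow 0}\rho(z_a)\le\rho(Z_+)$. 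For the matching lower bound, exploit $Z_+\in\mathcal{Z}$ to choose, for each $a$, a point $w$ near $Z_+$ with $X(w)$ within $o(a)$ of $S$, so that $S^a\ge X(w)+a\rho(w)\ge S-o(a)+a\rho(w)$ and $\rho(w)\to\rho(Z_+)$. The left derivative is handled identically after replacing $\rho$ by $-\rho$, which reverses the role of the endpoints and produces $Z_-$.

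The delicate point, and the main obstacle, is exactly this pathwise identification, because the c\`adl\`ag structure allows the supremum $S$ not to be attained (a downward jump), so one must work throughout with near-maximizers and quasi-maximizers and carefully balance the accuracy $\delta$ of the approximation against the perturbation size $a$ (e.g.\ letting $\delta=\delta(a)\to 0$ with $\delta(a)/a\to 0$). A pleasant by-product is that, since $\rho(Z_\pm)$ is now exhibited as a pointwise limit of the measurable difference quotients $(S^a-S)/a$, the measurability of $\rho(Z_\pm)$ comes for free. With the pathwise limits in hand, the uniform bound $|(S^a-S)/a|\le\|\rho\|_\infty$ lets me invoke dominated convergence to interchange limit and expectation, yielding $s'_+(0)=\E[\rho(Z_+)]$ and $s'_-(0)=\E[\rho(Z_-)]$.

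Finally I would close the equivalence. Since $Z_-\le Z_+$ and $\rho$ is strictly increasing, one always has $\rho(Z_+)-\rho(Z_-)\ge 0$, with equality if and only if $Z_-=Z_+$. Hence $s'(0)$ exists $\iff s'_-(0)=s'_+(0)\iff \E[\rho(Z_+)-\rho(Z_-)]=0\iff \rho(Z_+)=\rho(Z_-)$ almost surely $\iff Z_-=Z_+$ almost surely, which is precisely uniqueness of the quasi-maximizer. In that case $Z:=Z_-=Z_+$ and the common value of the derivatives gives $\frac{d}{da}s(a)\big|_{a=0}=\E[\rho(Z)]$, as claimed.
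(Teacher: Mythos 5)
Your proposal is correct and follows essentially the same route as the paper: the heart of both arguments is the pathwise identification $\lim_{a\downarrow 0}(S^a-S)/a=\rho(Z_+)$ and $\lim_{a\uparrow 0}(S^a-S)/a=\rho(Z_-)$ (the paper's Lemma~1), followed by dominated convergence with the bound $|(S^a-S)/a|\le \sup_K|\rho|$ and the strict monotonicity of $\rho$ to convert $\E[\rho(Z_+)]=\E[\rho(Z_-)]$ into $Z_+=Z_-$ almost surely. Your two refinements --- invoking convexity of $a\mapsto S^a$ to get the one-sided derivatives of $s$ for free, and working with $a^2$-near-maximizers in place of the paper's one-sided limits $h(z^o)$ at quasi-maximizers --- are sound but do not change the substance of the argument.
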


Based on Theorem \ref{teo:nl}, we give some extension to an arbitrary dimension $d$. However, we restrict ourselves to continuous processes in this case.

\begin{thm}\label{teo:multnl}
	Let $\{ X(z): z \in K\} $ be a continuous process defined on a non-empty compact set $K \subseteq \R^d$ such that its maximum $S$ satisfies $\E|S| < \infty$. For $a=(a_1, \dots a_d)$ in $\R^d$, define
	$$ X^a(z)= X(z) + \sum_{i=1}^d a_i \rho_i (z_i)  \qquad \forall \, z=(z_1,...,z_d) \in K  ,$$
	where $\rho_i$ is a strictly increasing continuous real function, for each $i=1,\dots,d$. Denote by $S^a$ the maximum of $X^a$, and set $s(a):= \E S^a$. Then the location $Z=(Z_1,...,Z_d)$ of the maximum of $X$ is unique if and only if the gradient of the map $a \mapsto s(a)$ exists at $a=\0$. In that case, we have that $$( \E ( \rho_i(Z_i) ) )_{i=1}^{d}  = \nabla s(\0). $$
\end{thm}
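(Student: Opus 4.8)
The plan is to reduce the problem, one coordinate at a time, to the one-dimensional situation already settled in Theorem \ref{teo:nl}, and then to reassemble the $d$ partial derivatives into a gradient by an appeal to convexity. Write $M := \argmax_{z \in K} X(z)$ for the (random, compact, non-empty) set of maximizers, and for each coordinate $i$ let $\pi_i(M) := \{z_i : z \in M\}$ be its projection onto the $i$-th axis. The first observation is that $s$ is convex on $\R^d$: for each realization, $S^a = \sup_{z}\big(X(z) + \sum_i a_i \rho_i(z_i)\big)$ is a supremum of functions that are affine in $a$, hence convex, and taking expectations (finite by $\E|S|<\infty$) preserves convexity. This already guarantees that all one-sided directional derivatives of $s$ exist; the content of the theorem is to decide when the two-sided partials exist and to compute them.

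The key step is the coordinatewise reduction. Fix $i$ and consider the profile process $Y_i(t) := \max_{z \in K,\, z_i = t} X(z)$, defined for $t$ in the compact projection $\pi_i(K) \subseteq \R$ and sharing the same integrable supremum $S$. Splitting the supremum defining $S^{a e_i}$ according to the value $t = z_i$ gives $S^{a e_i} = \sup_{t}\big(Y_i(t) + a\rho_i(t)\big)$, so that $\partial_{a_i} s(\0)$ is exactly the derivative at $a=0$ of the one-dimensional map attached to $Y_i$ and the strictly increasing perturbation $\rho_i$. Using the continuity of $X$ and the compactness of $K$, one then checks that the quasi-maximizer set of $Y_i$ coincides with the projected maximizer set, $\mathcal{Z}(Y_i) = \pi_i(M)$: if $t = z_i$ for some $z \in M$ then $\sup_{[t-\eps,t+\eps]} Y_i = S$ for every $\eps>0$, while conversely any quasi-maximizer of $Y_i$ is a limit of $i$-th coordinates of near-maximizers and hence, by compactness and continuity, the $i$-th coordinate of a genuine maximizer. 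Theorem \ref{teo:nl} applied to $Y_i$ then yields that $\partial_{a_i} s(\0)$ exists if and only if $Y_i$ has a unique quasi-maximizer, that is, if and only if $\pi_i(M)$ is a single point, and in that case $\partial_{a_i} s(\0) = \E\big(\rho_i(Z_i)\big)$ for that common $i$-th coordinate $Z_i$.

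It remains to pass from the $d$ partials to the full gradient. Since each $\rho_i$ is strictly increasing, hence injective, all points of $M$ share the same $i$-th coordinate for every $i$ if and only if $M$ is a single point; thus $X$ has a unique maximizer if and only if every $\pi_i(M)$ is a singleton, i.e. if and only if all $d$ partial derivatives $\partial_{a_i} s(\0)$ exist. A convex function on $\R^d$ is Fr\'echet differentiable at a point as soon as all its partial derivatives exist there (the classical fact from convex analysis that the subdifferential is then a singleton), so existence of the gradient of $s$ at $\0$ is equivalent to existence of all the partials, and the gradient is assembled from the one-dimensional formulas as $\nabla s(\0) = \big(\E(\rho_i(Z_i))\big)_{i=1}^d$.

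I expect the main obstacle to be the regularity of the profile process $Y_i$, which is precisely what licenses the use of Theorem \ref{teo:nl}. Continuity of $X$ together with compactness of $K$ gives, through upper hemicontinuity of the fiber map $t \mapsto \{z\in K: z_i=t\}$, that $Y_i$ is upper semicontinuous and that the inner maximum is attained; the delicate point is to control how these fibers vary with $t$ in order to secure the right-continuity and left limits required by the c\`{a}dl\`{a}g hypothesis, since for non-product $K$ a fiber may appear or disappear abruptly. I would handle this either by verifying directly that $Y_i$ is c\`{a}dl\`{a}g, or, more robustly, by observing that the proof of Theorem \ref{teo:nl} uses only that the paths are upper semicontinuous with attained suprema, a property that $Y_i$ always enjoys, so that the one-dimensional result applies to $Y_i$ unchanged.
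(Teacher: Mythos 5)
Your proposal follows essentially the same route as the paper: the paper's proof of Theorem~\ref{teo:multnl} rests on Lemma~\ref{lem:mult}, which introduces exactly your profile function ($f_i(x):=\max_{z\in K,\,\pi_i(z)=x}X(z)$, your $Y_i$), proves it is continuous, applies the one-dimensional Lemma~\ref{lem:one} to it, and then reassembles the coordinates, with dominated convergence (justified by the bound $\E|S^a|\le\E|S|+\sum_i|a_i|\max|\rho_i|$) doing the exchange of limit and expectation. Two of your deviations are worth recording. First, the convexity of $a\mapsto s(a)$ (supremum of affine functions, then expectation) is not used in the paper at all; the paper's converse direction only establishes existence of the $d$ partial derivatives and then calls this ``existence of the gradient,'' whereas your appeal to the fact that a finite convex function with all partials at a point is differentiable there genuinely closes that gap if ``gradient exists'' is read as differentiability. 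Second, your worry about the regularity of $Y_i$ is well placed: the paper's Claim asserts continuity of $f_i$, but its proof replaces the $i$-th coordinate of a maximizer $z_x$ by a nearby value $y$ and tacitly assumes the resulting point stays in $K$, which holds for product domains but fails for general compact $K$ (and already for the simplex used in the additive Brownian motion example); in general $f_i$ is only upper semicontinuous with attained suprema, and can even fail to be c\`adl\`ag. Your fallback --- that the proof of Lemma~\ref{lem:one} really only uses upper semicontinuity together with attainment of the supremum (so that quasi-maximizers are maximizers and the contradiction argument goes through with $h(z^o)$ replaced by $h(z)$) --- is the right repair, but as written it is a promissory note rather than a proof; to be complete you should either verify that extension of Theorem~\ref{teo:nl} explicitly or restrict to domains $K$ for which $Y_i$ is continuous. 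This is the only unfinished step in your argument, and it is one the paper itself does not handle correctly.
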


In Section 2, we present examples of applications of Theorems \ref{teo:nl} and \ref{teo:multnl} to spectrally positive L\'evy processes and continuous gaussian processes. In Section 3 we give the proofs of the main results and the examples presented.

\section{Examples}

In this section we give some examples of processes where Theorems \ref{teo:nl} and \ref{teo:multnl} can be applied to derive uniqueness of the location of its supremum.

\subsection{L\'evy processes}

Let us consider some spectrally positive L\'evy process $\{X(t): t \in [0,1] \}$. We have its L\'evy-It\^o representation
\begin{equation}\label{Levy-Ito}
 X(t)= c t + \sigma B(t) + Y(t) ,
\end{equation}
where $c$ is some real, $\sigma$ a positive number, $B$ is a standard Brownian Motion and $Y$ is a process conformed by the discontinuities of $X$, that is
$$ Y(t) = \sum_{ 0 \leq s \leq t} \Delta X_s .$$ 
The condition of being spectrally positive is translated on $X$ non having negative jumps: $\Delta X_t \geq 0$ for every $0 \leq t \leq 1$.

Consider a a c\`{a}dl\`{a}g  function $h$ without negative jumps. Note that, in this case, to be a quasi-maximizer is the same as to be a maximizer: the valuation of a quasi-maximizer reaches the value $S(h)$. This observation and Theorem \ref{teo:nl} implies the next result.

\begin{thm}\label{Levy}
For an spectrally positive L\'evy process $X$ with representation \eqref{Levy-Ito} it happens that:
\begin{enumerate}
	\item If $\sigma > 0$ the supremum of the process is reached at a unique point.
	\item When $\sigma =0$ but $c$ is different from $0$, we also have that the supreme is attained at a unique point.
	\item Consider the scenario where $\sigma=0$, $c=0$. Define the random variable 
	$$L:= \inf \Big\{ t \in [0,1] :  X_t = \sup_{ s \in [0,1]  } X_s  \Big\}.$$
	Then the event of the supremum being reached at a unique point (denoted by $A$) is the same as the event
	$\{ L = 1 \}$. Moreover, its probability is equal to $\P (\tau =0 )$, where $\tau$ is the exit time of process $X$ from zero.
\end{enumerate}
\end{thm}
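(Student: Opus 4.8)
The plan is to build on the remark preceding the statement: for a path with no negative jumps the quasi-maximizers are genuine maximizers, so Theorem \ref{teo:nl} applied with the strictly increasing $\rho(t)=t$ (for which $X^a(t)=X(t)+at=(c+a)t+\sigma B(t)+Y(t)$ is again spectrally positive, now with drift $c+a$) identifies the asserted uniqueness with differentiability of $s$ at $0$, granting the first–moment condition $\int_{(1,\infty)}x\,\nu(dx)<\infty$ that guarantees $\E|S|<\infty$. Rather than verify differentiability abstractly I would prove the equivalent pathwise uniqueness directly in each regime. Throughout I would first record that $S$ is attained: a c\`adl\`ag path whose jumps are all positive is upper semicontinuous, hence realises its supremum on the compact $[0,1]$.

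For (1), with $\sigma>0$, I would condition on the jump part. Writing $g(t)=ct+Y(t)$, which is independent of $B$, the process $\sigma B+g$ is, for each fixed realisation of $g$, a separable Gaussian process with $\rho(s,t)=\Var(\sigma B_t-\sigma B_s)=\sigma^2|t-s|>0$ whenever $s\neq t$. The Kim--Pollard/Arcones criterion recalled in the introduction then gives that no sample path attains its maximum at two distinct points; together with attainment this yields a unique maximizer conditionally, and de-conditioning over $g$ proves (1).

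For (2) with $\sigma=0$: when $c>0$ the path $X(t)=ct+Y(t)$ is strictly increasing (strictly increasing drift plus a nondecreasing pure-jump part), so the maximum sits uniquely at $t=1$. When $c<0$ I would use $X(t)=-|c|t+Y(t)$ with $Y$ a driftless subordinator: two distinct maximizers $t_1<t_2$ would force $Y_{t_2}-Y_{t_1}=|c|(t_2-t_1)$ while $Y_t-Y_{t_1}\le|c|(t-t_1)$ on $[t_1,t_2]$, i.e. the pure-jump subordinator would have to meet a line of slope $|c|$ exactly at the endpoint while staying below it. For a fixed pair this is null, since the increments of a driftless subordinator carry no atom at a positive value; reducing ``some pair'' to the first and last maximizers and sandwiching by rationals upgrades this to an almost sure statement. \emph{This last sub-case (negative drift with infinite jump activity) is the point I expect to be the main obstacle}, as neither the Gaussian argument of (1) nor the monotonicity of the $c>0$ case applies; the cleanest rigorous route may be to invoke instead the fluctuation-theoretic fact that such a process has $0$ regular for $(-\infty,0)$, which is known to be equivalent to a.s. uniqueness of the location of the maximum.

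For (3), with $\sigma=c=0$ the process $X=Y$ is a driftless subordinator, hence nondecreasing, so $S=X_1$ and the maximizer set is exactly $[L,1]$; thus $A=\{L=1\}$, which is the first assertion. To compute its probability I would pass to the time-reversed process $\hat X_s=X_1-X_{(1-s)^-}$, a L\'evy process with $\hat X\eqdi X$. A short manipulation using monotonicity shows $\{L=1\}=\{X_{u^-}<X_1\ \forall u<1\}=\{\hat X_s>0\ \forall s\in(0,1]\}$, and since $\hat X\eqdi X$ is a subordinator this event has the same probability as $\{X_s>0\ \forall s\in(0,1]\}=\{\tau=0\}$, where $\tau$ is the first exit of $X$ from $0$. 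Hence $\P(A)=\P(\tau=0)$.
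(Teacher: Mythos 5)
Your parts (1), (2)-with-$c>0$, and (3) are essentially sound. Part (1) takes a genuinely different route from the paper: you condition on the jump part exactly as the paper does, but then invoke the Kim--Pollard/Arcones criterion for the Gaussian process $\sigma B+g$, whereas the paper applies its own machinery (Lemma 2 of \cite{Pi} together with Theorem \ref{teo:nl}) to $ct+\sigma B_t+f(t)$. Your route is shorter but does not produce the derivative/covariance identities that the paper's method is designed to yield; also note that the conditioned process has jumps, so you need the separability form of the criterion (Arcones' refinement, as quoted in the introduction) rather than the path-continuity version of Kim--Pollard. Part (3) reproduces the paper's time-reversal argument; your identification $A=\{L=1\}$ from monotonicity is exactly the pathwise content of Lemma \ref{lem:one} that the paper uses, and your formulation of the reversal is if anything cleaner than the paper's.

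The genuine gap is the sub-case $\sigma=0$, $c<0$, precisely where you anticipated trouble, and neither of your two proposed fixes works as written. First, the assertion that ``the increments of a driftless subordinator carry no atom at a positive value'' is false: for a compound Poisson jump part with atomic jump law (e.g.\ a standard Poisson process) one has $\P(Y_{t_2}-Y_{t_1}=k)>0$ for fixed $t_1<t_2$ and integer $k$, so the fixed-pair event is not automatically null; and in any case passing from fixed pairs to the random pair of extreme maximizers is the real difficulty, which ``sandwiching by rationals'' does not resolve because the constraint is an exact equality at random times. Second, the fallback ``$0$ regular for $(-\infty,0)$ is equivalent to a.s.\ uniqueness of the location of the maximum'' is not a correct equivalence: a driftless infinite-activity subordinator has $0$ irregular for $(-\infty,0)$ yet strictly increasing paths, hence a unique maximizer at $t=1$. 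The statement you want is that the supremum time of a L\'evy process over a compact interval is a.s.\ unique if and only if the process is not compound Poisson (equivalently, $0$ is regular for $\R\setminus\{0\}$); since $ct+Y$ with $c\neq 0$ is never compound Poisson, citing that fact would close the gap. For comparison, the paper disposes of this sub-case by asserting $S^a=S+aL$ for all small $a$ of either sign, so that both one-sided limits in Lemma \ref{lem:one} equal $L$ and Theorem \ref{teo:nl} applies; be aware that for $a>0$ this identity already presupposes that the last maximizer coincides with $L$, so the obstacle you isolated is genuinely present and is not fully dispatched by the paper's one-line computation either.
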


As a consequence, the equality $ \P ( A ) = \P( \tau=0 )$ is valid in all the cases enumerated in Theorem \ref{Levy}. 

\begin{rem} In the case where $\sigma >0$ the proof can be applied to general L\'evy processes, giving the result of uniqueness for the quasi-maximizer of the process.
\end{rem}

\subsection{Gaussian processes}

As a corollary of Theorem \ref{teo:nl} we have the next result.

 \begin{thm}\label{theo:gauss1} 
 Let $G$ be a zero mean continuous gaussian process defined on $K=[0,T]$ such that $G(0)=0$, and with covariance function $R$. Let $f:K \rightarrow \mathbb R$ be a deterministic c\`{a}dl\`{a}g function, and set $ X(z)= G(z) +f(z)$, for $z \in K$.  Let $S$ be the maximum of $X$ and assume that $\esp(S^2)< \infty$. Also, assume that $R(z,T)$ is an increasing function of $z$. Then, the location $Z$ of the maximum of $X$ is $a.s.$ unique and it satisfies
$$ \E ( R(Z,T) ) = \Cov(S, X( T)) .$$
\end{thm}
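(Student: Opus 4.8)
The plan is to apply Theorem~\ref{teo:nl} with the perturbation direction chosen to be $\rho(z) := R(z,T)$. This $\rho$ is continuous (the $L^2$-continuity of $G$ forces $R$ to be continuous) and increasing by hypothesis, so it is an admissible perturbation, and the hypothesis $\esp(S^2)<\infty$ gives $\E|S|<\infty$ as required. The whole point of this choice is that $R(\cdot,T)=\E[G(\cdot)\,G(T)]$ is precisely the Cameron--Martin element associated with the first-chaos variable $G(T)$, with squared Cameron--Martin norm $R(T,T)=\Var(G(T))$. I would first record the Cameron--Martin identity: for any bounded-below measurable functional $F$ of the path,
\[
\E\big[F\big(G(\cdot)+a\,R(\cdot,T)\big)\big]=\E\Big[F(G)\,\exp\Big(a\,G(T)-\tfrac{a^2}{2}R(T,T)\Big)\Big].
\]
Applying this with $F(g)=\sup_{z\in K}\big(g(z)+f(z)\big)$, which evaluates to $S$ at the path $G$ and to $S^a$ at the shifted path $G+aR(\cdot,T)$ (here it is essential that $f$ is deterministic, so it is carried along by the shift), yields the explicit formula
\[
s(a)=\E\Big[S\,\exp\Big(a\,G(T)-\tfrac{a^2}{2}R(T,T)\Big)\Big].
\]

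Next I would differentiate this expression in $a$. Writing $w_a:=\exp\!\big(aG(T)-\tfrac{a^2}{2}R(T,T)\big)$, one has $\partial_a w_a=(G(T)-aR(T,T))\,w_a$, so that formally
\[
s'(a)=\E\big[S\,(G(T)-aR(T,T))\,w_a\big],\qquad s'(0)=\E[S\,G(T)].
\]
In particular the mere existence of this derivative at $a=0$ is exactly the hypothesis needed to invoke the ``if'' direction of Theorem~\ref{teo:nl}. Since $G$ is a continuous process, the set of quasi-maximizers coincides with the set of true maximizers, so the unique quasi-maximizer furnished by the theorem is the unique location $Z$ of the maximum of $X$, and moreover $\E(\rho(Z))=\E(R(Z,T))=s'(0)$.

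It then remains to identify $s'(0)$ with the stated covariance. Because $G$ has zero mean, $\E[G(T)]=0$, and because $X(T)=G(T)+f(T)$ with $f(T)$ deterministic,
\[
\Cov(S,X(T))=\Cov(S,G(T))=\E[S\,G(T)]-\E[S]\,\E[G(T)]=\E[S\,G(T)]=s'(0),
\]
which gives $\E(R(Z,T))=\Cov(S,X(T))$, completing the argument.

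The main obstacle is the justification of differentiation under the expectation sign, since the weight $w_a$ is unbounded. I expect to handle this by dominated convergence: for $a$ in a fixed neighbourhood $[-\delta,\delta]$ of $0$, Cauchy--Schwarz bounds $\E\big[|S\,(G(T)-aR(T,T))\,w_a|\big]$ by $\|S\|_2$ times the $L^2$-norm of $(G(T)-aR(T,T))\,w_a$, and the latter is finite and uniformly bounded on $[-\delta,\delta]$ by the Gaussian integrability of $G(T)$ (an exponentially weighted Gaussian moment). This $L^1$ domination supplies both the differentiability and the interchange. Two lesser points I would verify along the way are that $R(\cdot,T)$ is genuinely the Cameron--Martin representative of $G(T)$ in the reproducing-kernel sense, and that $R(\cdot,T)$ is strictly (not merely weakly) increasing as demanded by Theorem~\ref{teo:nl}; if only weak monotonicity is available one can perturb by $R(z,T)+\eps\,z$ and let $\eps\downarrow 0$.
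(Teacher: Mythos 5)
Your argument is correct and reaches the stated formula, but it takes a genuinely different route from the paper. The paper deduces Theorem~\ref{theo:gauss1} from Lemma~\ref{lem1gau} (via Proposition~\ref{propgauss}): it decomposes $X(z)\eqdi X^{T}(z)+\gamma(z)N$ with $\gamma(z)=R(z,T)/R(T,T)$ and $N\sim N(0,R(T,T))$ independent of the bridge, observes that the perturbation $a\gamma(\cdot)$ merely translates $N$, and differentiates the one-dimensional Gaussian density of $N$ under the integral to get $s'(0)=\Cov(S,X(T))/R(T,T)$ for the normalized direction (equivalently $\Cov(S,X(T))$ for the direction $R(\cdot,T)$, after the change of variables used in the proof of Theorem~\ref{theo:gauss}). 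You instead invoke the Cameron--Martin theorem on path space: the shift $aR(\cdot,T)$ is the Cameron--Martin element of $G(T)$, so $s(a)=\E[S\exp(aG(T)-\tfrac{a^2}{2}R(T,T))]$, and $s'(0)=\E[SG(T)]=\Cov(S,X(T))$. The two computations are conditional and unconditional versions of the same Gaussian change of measure; the paper's is more elementary (only finite-dimensional densities, and it generalizes painlessly to the multivariate Theorem~\ref{theo:gauss}), while yours is shorter and makes the identification of $s'(0)$ with a covariance transparent. Your domination argument (Cauchy--Schwarz against $\|S\|_2$ plus exponential Gaussian moments of $G(T)$) is sound and plays the role of the paper's interchange of derivative and integral. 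Two caveats, both of which you partially flag: (i) Theorem~\ref{teo:nl} needs $\rho$ \emph{strictly} increasing, and your fallback of perturbing by $R(z,T)+\eps z$ does not mesh with the Cameron--Martin computation, since $z\mapsto z$ need not lie in the Cameron--Martin space; in practice one should simply read the hypothesis as strict monotonicity, as the paper does in Theorem~\ref{theo:gauss} and in all its examples. (ii) Since $f$ is only c\`adl\`ag, $X$ is c\`adl\`ag, and Theorem~\ref{teo:nl} yields uniqueness of the \emph{quasi}-maximizer; your appeal to continuity of $G$ alone does not rule out a downward jump of $f$ at the quasi-maximizer, so the identification with a true maximizer is an imprecision --- one shared with the paper's own statement.
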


To prove Theorem \ref{theo:gauss1} it is necessary to develop some method for computing the derivative of the map $s(a)$. This can be accomplished for gaussian processes by adapting the method used in \cite{Pi} for Brownian motion, while using anticipative representations of gaussian bridges (see \cite{GasSo}). The proof is presented in Section 3.\\

In the following, we show some concrete examples of Theorem \ref{theo:gauss1}.

\noindent\paragraph{\bf Ornstein-Uhlenbeck process}

Let $X$ be a centered Ornstein-Uhlenbeck process starting at the origin:
$$X_s = \frac{\sigma}{ \sqrt{2 \gamma} } e^{- \gamma s}  W \Big( e^{2 \gamma s} -1 \Big)  \qquad \forall \, 0 \leq s \leq t ,$$
where $\gamma$ and $\sigma$ are positive numbers and $W$ denotes a standard Brownian motion. Then 
$$ \Cov (X_s,X_t)= \frac{\sigma^2}{2 \gamma} e^{- \gamma t} ( e^{ \gamma s} - e^{- \gamma s} )    \qquad \, \forall 0 \leq s \leq t ,$$
so $\Cov (X_s,X_t)$ is strictly increasing on $[0,t]$, as a function of $s$. Hence we can apply Theorem \ref{theo:gauss1} to obtain uniqueness of the location of maximum of $X$ on $[0,t]$ and the formula:
$$ \E ( e^{\gamma Z} - e^{ -\gamma Z} ) = \frac{ 2\gamma}{ \sigma^2} e^{ \gamma t}  \Cov(S, X(t))  ,$$
where $S$ is the maximum of $X$ on $[0,t]$ and $Z$ its location. \\

\noindent\paragraph{\bf  Fractional Brownian Motion.}
Consider a centered gaussian process $X$ defined on $[0,t]$ starting at the origin with covariance function given by
$$ \Cov (X_u,X_v)= \frac{1}{2} ( u^{2H} + v^{2H} - |u-v|^{2H}  )  \qquad \forall \, u,v \in [0,t]\,,$$ 
where $H \in (0,1)$. By checking that the function $\rho(s)= \Cov(X_s, X_t) $ has positive derivative on $[0,t]$, one can apply Theorem \ref{theo:gauss1}  conclude uniqueness of the location of the maximum of $X$ and obtain the formula
$$ \E (z^{2H}-(t-Z)^{2H} ) = 2 \Cov (S,X(t)) - t^{2H} .$$

By dealing with orthogonal decompositions for multivariate gaussian process (see Lemma \ref{lem1gau} in Section $3$) one can apply Theorem \ref{teo:multnl} to obtain an extension of Theorem \ref{theo:gauss1} for an arbitrary dimension.

\begin{thm}\label{theo:gauss} 
	Let $G$ be a zero mean continuous gaussian process defined on a non-empty compact set $K$ cointaining the origin such that $G(\0)=0$ and with covariance function $R$. Let $f:K \rightarrow \mathbb R^d$ be a deterministic continuous function and set  $X(z)= G(z) +f(z)$, for $z \in K$.  Let $S$ be the maximum of $X$ and assume that $\esp(S^2)< \infty$. Also, assume that there exists a collection of points $\{t^i\}_{i=1}^{d}$ in $K$ such that 
	\begin{enumerate}
		\item the gaussian vector $(X(t^1),...,X(t^d))$ has an invertible and diagonal covariance matrix,
		\item for every $z$ in $K$ we have that $R(z, t^i)$ only depends on $z_i$ (the $i$-coordinate of $z$),
		\item $R(z,t^i)$ is a strictly increasing function of $z_i$, and
		\item it takes the value $0$ for $z_i=0$.
	\end{enumerate}
	 Then, the location $Z$ of the maximum of $X$ is $a.s.$ unique and it satisfies
	$$ \E ( R(Z_i, t^i ) ) = \Cov(S, X( t^i))  \qquad \forall \, 1 \leq i \leq d.$$
\end{thm}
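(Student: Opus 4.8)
The plan is to deduce this from Theorem \ref{teo:multnl}, using the points $\{t^i\}$ to generate the admissible perturbation directions and recognizing that the resulting tilt of the process is a Cameron--Martin shift of the Gaussian law. First I would define, for each $i$, the function $\rho_i$ by $\rho_i(z_i):=R(z,t^i)$; hypotheses (2)--(4) guarantee that this depends on $z_i$ alone, is strictly increasing in $z_i$, and vanishes at $z_i=0$, while continuity of $R$ (which follows from $L^2$-continuity of $G$) makes $\rho_i$ continuous. Thus the $\rho_i$ are exactly of the type demanded by Theorem \ref{teo:multnl}, and since each is bounded on the compact $K$ the perturbed maximum satisfies $\E|S^a|\le\E|S|+C<\infty$, so the hypotheses of that theorem hold. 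The crucial observation is that
$$\sum_{i=1}^d a_i\rho_i(z_i)=\sum_{i=1}^d a_i R(z,t^i)=\Cov\Big(G(z),\,\sum_{i=1}^d a_i G(t^i)\Big),$$
so the perturbation is precisely a shift of $G$ in the reproducing-kernel direction associated with $\xi_a:=\sum_i a_i G(t^i)$.

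The core of the argument is to show that $\nabla s(\0)$ exists and to evaluate it, and this is where hypothesis (1) enters. Since the covariance matrix of $(G(t^1),\dots,G(t^d))$ is diagonal and invertible, the $G(t^i)$ are independent with $r_i:=\Var(G(t^i))>0$, and by the orthogonal decomposition of Lemma \ref{lem1gau} one may write $G(z)=\sum_{i}r_i^{-1}R(z,t^i)\,G(t^i)+B(z)$, where the bridge part $B$ is independent of $(G(t^1),\dots,G(t^d))$. Setting $g_i:=G(t^i)$ and $\gamma_i:=r_i^{-1}g_i+a_i$, the perturbed maximum becomes
$$S^a=\sup_{z\in K}\Big[B(z)+f(z)+\sum_{i=1}^d R(z,t^i)\,\gamma_i\Big]=:M(\gamma_1,\dots,\gamma_d),$$
which, conditionally on $B$, is a supremum of affine functions of $\gamma$. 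As the $g_i$ are independent centered Gaussians independent of $B$, conditioning on $B$ represents $s(a)=\E\,M(r_1^{-1}g_1+a_1,\dots,r_d^{-1}g_d+a_d)$ as the convolution of $M$ with a smooth product-Gaussian density, which is the anticipative/convolution form. Equivalently, the Cameron--Martin change of measure driven by $\xi_a$ gives $s(a)=\E\big[S\exp(\xi_a-\tfrac12\Var(\xi_a))\big]$, and differentiating the exponential weight at $a=\0$ (its $a_j$-derivative there being $G(t^j)$) yields
$$\frac{\partial}{\partial a_j}s(a)\Big|_{a=\0}=\E\big[S\,G(t^j)\big].$$

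The hardest point is justifying the interchange of differentiation and expectation, i.e. the actual existence of the gradient; this is exactly where the assumption $\esp(S^2)<\infty$ is used. One route is the convolution-smoothing argument, where the Gaussian kernel is smooth and $M$ is square-integrable, providing the needed domination; an alternative is to bound the difference quotients of the exponential weight by a square-integrable envelope and apply dominated convergence, the key integrability estimate being $\E|S\,G(t^j)|\le\sqrt{\esp(S^2)\,\Var(G(t^j))}<\infty$ by Cauchy--Schwarz. Once existence is established, I would identify the derivative with a covariance: since $G(t^j)=X(t^j)-f(t^j)$ and $\E[X(t^j)]=f(t^j)$, one obtains $\E[S\,G(t^j)]=\Cov(S,X(t^j))$, hence $\nabla s(\0)=(\Cov(S,X(t^i)))_{i=1}^d$. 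Theorem \ref{teo:multnl} then delivers simultaneously the a.s. uniqueness of the maximizer $Z$ and the identity $\E(R(Z_i,t^i))=\E(\rho_i(Z_i))=\Cov(S,X(t^i))$ for each $i$, which is the assertion.
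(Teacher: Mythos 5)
Your proposal is correct and follows the same overall strategy as the paper: take $\rho_i(z_i)=R(z,t^i)$ as the perturbation directions (hypotheses (2)--(4) making them admissible for Theorem \ref{teo:multnl}), establish that $\nabla s(\0)=(\Cov(S,X(t^i)))_{i=1}^d$, and let Theorem \ref{teo:multnl} deliver both uniqueness and the identity. The only real difference is in how the gradient is computed. The paper's Lemma \ref{lem1gau} conditions on the vector $(X^a(t^1),\dots,X^a(t^d))$, observes via the anticipative bridge representation (Proposition \ref{propgauss}) that the conditional law of $X^a$ given these values does not depend on $a$, and then differentiates the explicit Gaussian density $K_u(a)$ --- which is precisely your ``convolution of $M$ with a product-Gaussian density'' formulation. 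Your alternative via the Cameron--Martin density $\exp(\xi_a-\tfrac12\Var\xi_a)$ is an equivalent and arguably cleaner derivation of the same formula (note that $\nabla_a\Var(\xi_a)$ vanishes at $a=\0$ regardless of diagonality, so this route uses hypothesis (1) less heavily than the paper's conditioning argument does). Two minor points: the orthogonal decomposition you invoke is Proposition \ref{propgauss}/Lemma \ref{lemLaxM}, not Lemma \ref{lem1gau}; and the Cauchy--Schwarz bound $\E|S\,G(t^j)|<\infty$ controls the limit but not the difference quotients themselves --- the dominating envelope needs a factor like $(|G(t^j)|+C)e^{\eps|G(t^j)|}$, which is still square-integrable by Gaussian exponential moments, so the argument goes through as you indicate.
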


We show some applications of this result.

\noindent\paragraph{\bf Brownian sheet with Brownian frontier}

Consider the centered gaussian process $\{ B(z) : z \in K\}$ with covariances given by
$$ \Cov ( B(u), B(v)) = \prod_{i=1}^{d} (u_i \wedge v_i), \textrm{ for } u=(u_1,...,u_d)\,\mbox{ and }\,v=(v_1,...,v_d) \textrm { in } K:=\prod_{i=1}^d[0,T_i] \subseteq \mathbb R ^d. $$
Let $\{ W^i\}_{i=1}^{d}$ be a collection of independent standard Brownian motions which are independent from $S$. Define the process $\{X(z): z \in K\}$ by
$$ X(z):= B(z) + \sum_{i=1}^{d} W^i(z_i),  \textrm{ for } z=(z_1,...,z_d) \in K.$$
We will say that $X$ is a Brownian sheet with initial conditions on the orthant's frontier given by independent Brownian motions.  Choose $t^i:=T_i \ee^i$ for $ 1 \leq i \leq d$.  By direct computing, one obtains that $R(z, T_i \ee^i)=z_i$, for $z=(z_1,...,z_d)$ in $K$. In particular, $R(T_j \ee^j, T_i \ee^i)= T_i \delta_{i,j}$. Hence we can apply Theorem \ref{theo:gauss} to obtain that the location of the maximum is $a.s.$ unique and 
$$ ( \E ( Z_i)  )_{i=1}^{d} = \Big( \Cov(S, X( T_i \ee^i)) \Big)_{i=1}^{d} .$$

\noindent\paragraph{\bf Linear covariances} Here $X$ is a centered gaussian process on $K:=[0,T_1] \times ... \times [0,T_d]$, with covariances given by
$$ \Cov(X_u,X_v)= \sum_{i=1}^{d} u_i v_i  \quad \forall u=(u_1,...,u_d), v=(v_1,...,v_d) \in K .$$
We have that $R(z, T_i \ee^i) =T_i z_i$ so we have uniqueness of the location of the maximum, by Theorem \ref{theo:gauss}.\\

\noindent\paragraph{\bf Additive Brownian Motion}
For $\xx=(1,n) \in (0,\infty)  \times \mathbb N$ denote by $\Gamma(\xx)$ 
the set of all increasing sequences $\gamma=(z_0=0 \leq z_{1} \leq \dots \leq z_{n+1}=1)$. Define the passage time of a sequence $\gamma$ by 
\begin{equation*}\label{passage} 
L(\gamma):=\sum_{i=0}^{n} B^{(i)}(z_{i+1})-B^{(i)}(z_i)\, ,
\end{equation*}
where $\{B^{(i)} \}_{i \geq 0}$ is an independent collection of standard Brownian motions. The last-passage time between $\0 = (0,0)$ and $\xx$ is defined as
\begin{equation}\label{lastpassage} 
L(\0,\xx):=\sup_{\gamma\in\Gamma(\xx)} L(\gamma) \,.
\end{equation}
The \textit{geodesic} from $\0$ to $\xx$ is defined as the path $\gamma^*$ in $\Gamma(\xx)$ such that $L(\gamma ^*) = L(\0, \xx)$. In order to be well defined it is necessary to show that the supremum \eqref{lastpassage} is attained by a unique sequence $\gamma^*$; we do it in the following. Precise asymptotic results for this model have been obtained, see \cite{HMO}. Let us note that having uniqueness of the geodesic from $\0$ to $\xx$ is equivalent to have a unique location for the supremum of the process:
$$ X( u_1,...,u_n ):= B^{(0)}(0,u_1) + B^{(1)}(u_1,u_1+u_2) +...  +B^{(n-1)} \Big( \sum_{i=1}^{n-1} u_i ,  \sum_{i=1}^{n} u_i \Big) + B^{(n)} \Big( \sum_{i=1}^{n} u_i ,  1 \Big), $$
defined for points in the compact domain
$$ K= \Big\{  u=(u_1,...,u_n) : u_i \geq 0, \, \sum_{i=1}^{n} u_i \leq 1 \Big \}. $$
The covariance $R$ of this process satisfy
$$ R( u,  \ee^j) = \Cov \Big( B^{(j-1)} \Big(\sum_{i=1}^{j-1} u_i , \sum_{i=1}^{j} u_i \Big) , B^{(j-1)} (0,1) \Big) = u_j \qquad \forall \, u=(u_1,...,u_n) \in K,$$
hence Theorem 5 can be applied to conclude uniqueness of the location of the supremum of $X$.

\section{Proofs}

\subsection{Uniqueness criteria}

We generalize Lemma $1$ in \cite{Pi}. The proof, suggested by Remark $1$ in \cite{Pi}, follows the same lines and we prove it for the sake of completeness. Let $h: K \subseteq \mathbb{R}^d \rightarrow \mathbb{R}$  be a function defined on a non-empty compact set $K$. Denote its supremum value by $S(h)$ and define the set
$$ \mathcal{Z} (h) := \Big\{ z \in K :  \forall \eps>0 ,  \,  \sup_{ x \in B_{\eps} (z) } h(x) = S(h) \Big\} .$$ 
In the following, we consider the case when $h$ is real so one can define
$$  Z_l (h) := \inf \mathcal{Z} (h) , \qquad Z_r :=\sup \mathcal{Z}(h).$$

\begin{lem}\label{lem:one}
	For $a$ fixed, define the function $h^{a} : \mathbb{R} \rightarrow \mathbb{R}$ by 
	$$ h^{a} (z) := h(z) +  a \rho(z)   \qquad \forall z \in K,$$
	where $\rho$ is a strictly increasing continuous real function and $K$ is a non-empty compact set. Then
	$$ \lim_{ a \rightarrow 0^- } Z_l ( h^{a} ) = Z_l ( h) , \quad \lim_{a \rightarrow 0^+} Z_r (h^{a} ) = Z_r ( h) .$$
	Moreover,
	$$ \lim_{a \rightarrow 0^-} \frac{ S(h^{a}) -S(h)}{a}= 
	\rho (Z_l ( h) ), \qquad \lim_{ a \rightarrow 0^+ } \frac{S(h^{a}) -S(h) }{a}= \rho (Z_r ( h) ). $$
\end{lem}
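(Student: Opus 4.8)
The plan is to reduce everything to the upper-semicontinuous (usc) regularization of $h$. Define
$$ g(z) := \inf_{\eps>0} \sup_{x \in B_\eps(z)} h(x)\,, \qquad z \in K\,. $$
Since $K$ is compact and $h$ is bounded (as it is in all our applications, where $h$ is c\`adl\`ag on a compact set), $g$ is a real-valued usc function satisfying $g \geq h$ and $\sup_K g = S(h)=:S$, with $g\le S$ everywhere. Because $\sup_{x\in B_\eps(z)}h(x)$ is nonincreasing and bounded by $S$ as $\eps\downarrow 0$, the condition defining $\mathcal{Z}(h)$ is equivalent to $g(z)=S$; hence $\mathcal{Z}(h)$ is exactly the nonempty compact set $A:=\argmax_K g$, and $Z_l(h)=\min A$, $Z_r(h)=\max A$. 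Since $\rho$ is continuous, adding $a\rho$ commutes with the regularization, so the envelope of $h^a$ is $g+a\rho$; consequently $S^a:=S(h^a)=\max_{z\in K}\big(g(z)+a\rho(z)\big)$ and $\mathcal{Z}(h^a)=\argmax_{z}\big(g(z)+a\rho(z)\big)$. As $g+a\rho$ is usc on the compact set $K$, this argmax is nonempty; fix any maximizer $z_a$.

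The crux is an ordering argument driven by the \emph{strict} monotonicity of $\rho$. Take $a<0$. Evaluating $g+a\rho$ at the admissible point $Z_l(h)\in A$ gives $S^a \geq g(Z_l(h))+a\rho(Z_l(h)) = S + a\rho(Z_l(h))$, while $g\le S$ gives $S^a = g(z_a)+a\rho(z_a) \le S + a\rho(z_a)$. Comparing the extremes yields $a\rho(Z_l(h))\le a\rho(z_a)$, and dividing by $a<0$ gives $\rho(z_a)\le\rho(Z_l(h))$, hence $z_a\le Z_l(h)$ because $\rho$ is strictly increasing. Thus for $a<0$ every maximizer of $h^a$ lies at or to the left of $Z_l(h)$; the symmetric computation, comparing with $Z_r(h)$, shows that for $a>0$ every maximizer satisfies $z_a\ge Z_r(h)$.

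To pin down the limit, let $a_n\to 0^-$ and (by compactness of $K$) $z_{a_n}\to z^\ast$ along a subsequence. From $g(z_{a_n})+a_n\rho(z_{a_n})=S^{a_n}\ge S+a_n\rho(Z_l(h))$ and the boundedness of $\rho$ on $K$ we get $g(z_{a_n})\ge S+a_n\big(\rho(Z_l(h))-\rho(z_{a_n})\big)\to S$, so $g(z_{a_n})\to S$ (recall $g\le S$). By usc of $g$ this forces $g(z^\ast)\ge\limsup_n g(z_{a_n})=S$, i.e. $z^\ast\in A$ and in particular $z^\ast\ge Z_l(h)=\min A$; combined with $z_{a_n}\le Z_l(h)$ from the previous step, every subsequential limit equals $Z_l(h)$, whence $z_a\to Z_l(h)$ and $Z_l(h^a)=\min\mathcal{Z}(h^a)\to Z_l(h)$ as $a\to 0^-$. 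The analogous argument gives $Z_r(h^a)\to Z_r(h)$ as $a\to 0^+$. Finally, the two displayed inequalities for $a<0$ read $\rho(z_a)\le (S^a-S)/a\le\rho(Z_l(h))$; since $\rho(z_a)\to\rho(Z_l(h))$ by continuity of $\rho$, the squeeze gives $(S^a-S)/a\to\rho(Z_l(h))$, and symmetrically $(S^a-S)/a\to\rho(Z_r(h))$ as $a\to 0^+$.

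The main obstacle is that $h$ is only c\`adl\`ag, so its supremum need not be attained and the naive argmax of $h$ may miss the quasi-maximizers; passing to the usc envelope $g$ is precisely what makes the maximum attained and lets the elementary comparison inequalities run cleanly. The second point requiring care is that \emph{strict} monotonicity of $\rho$ (not mere monotonicity) is essential: it is what upgrades $\rho(z_a)\le\rho(Z_l(h))$ into the location bound $z_a\le Z_l(h)$, thereby selecting the extreme quasi-maximizer $Z_l(h)$ (resp. $Z_r(h)$) in the one-sided limits.
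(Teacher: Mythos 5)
Your proof is correct, and it takes a genuinely different route from the paper's. The paper works with the c\`adl\`ag function $h$ directly: for a quasi-maximizer $z$ it selects a one-sided limit $z^o$ with $h(z^o)=S(h)$, writes the two-sided comparison $S+a\rho(Z_j)\le S^a\le S+a\rho(Z_j^a)$, and then proves convergence of the extreme quasi-maximizers by contradiction, extracting a subsequence with $\rho(Z_l^{a_n})\le\rho(Z_l)-\delta$ and exhibiting a quasi-maximizer strictly to the left of $Z_l$. You instead pass to the upper-semicontinuous envelope $g$, observe that $\mathcal{Z}(h)=\argmax_K g$ and that the envelope commutes with adding the continuous perturbation $a\rho$, and then run the same comparison inequalities on $g$, where the maximum is genuinely attained. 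This buys several things: the directional-limit bookkeeping disappears; you obtain the clean ordering $z_a\le Z_l(h)$ for $a<0$ at the level of locations (the paper only records it at the level of $\rho$-values, and the sign in its displayed case $a>0$ is in fact a typo there); and your argument uses nothing about $h$ beyond boundedness, so the c\`adl\`ag hypothesis enters only to guarantee $S(h)<\infty$. The sandwich for the difference quotient is the same in both proofs. The one point worth making explicit is the chain that makes everything well-posed: $S(h)<\infty$ implies $g$ is real-valued and usc on the compact $K$, hence attains its maximum, hence $\mathcal{Z}(h)=\argmax_K g$ is a nonempty compact set, so $Z_l(h),Z_r(h)$ belong to it and you may legitimately take $z_a=Z_l(h^a)\in\mathcal{Z}(h^a)$ in the limiting argument. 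With that spelled out, the proof is complete.
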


\begin{proof}
Note that for a c\`{a}dl\`{a}g function $h$ and $z$ in $\mathcal Z (h)$ it must happen that $h(z^-)=S(h)$ or $h(z^+)=S(h)$. Denote by $z^o$ to one of those directional limits, the one such that $h(z^o)=S(h)$. Similarly, we will use $(z^{a})^o$ for denoting some directional limit such that $h^a( (z^{a})^o ) =S(h^a)$, where the election of the direction depends on $a$. For $ j=l,r$ it happens that
\begin{equation}\label{eqbas}
S + a \rho(Z_j)= h(Z_j^o) + a \rho(Z_j) \leq \sup_{z \in K }  \{h(z) + a \rho (z) \} =S^a = h( (Z_j^{a})^o ) + a \rho(Z_j^{a}) \leq S + a \rho(Z_j^a),
\end{equation} 
where first inequality is due to maximality of $S^a$ and the second one to maximality of $S$, besides continuity of $\rho$. This implies
\begin{eqnarray}\label{eqsign}
0 \leq a ( \rho(Z_j^a) - \rho(Z_j) ) \qquad j=l,r,
\end{eqnarray}
and also
\begin{equation}\label{eqdif}
0 \leq a ( \rho(Z_j^a) - \rho(Z_j) ) - ( h(Z_j^o)-h((Z_j^{a})^o)  )   \qquad j=l,r.
\end{equation}
Since $h(Z_j^o) \geq h((Z_j^{a})^o)$, by Equation \eqref{eqsign} we have that
\begin{displaymath} 
\left \{ \begin{array}{cc}
\rho(Z_j^a) \leq \rho(Z_j) &  \textrm{ for } a < 0 ,\\
\rho(Z_j^a) \leq \rho(Z_j) & \textrm{ for }  a>0.
\end{array} \right. 
\end{displaymath}

Assume that the first conclusion of Lemma \ref{lem:one} does not hold, that is $\lim_{a \rightarrow 0^-} Z_l^a \neq Z_l$. Then since $\rho$ is continuous and injective we have that
$$ \lim_{a \rightarrow 0^-} \rho( Z_l^a) = \rho \Big( \lim_{a \rightarrow 0^-} Z_l^a \Big) \neq \rho( Z_l).$$
This, combined with \eqref{eqsign}, implies the existence of $\delta >0$ and some sequence of real numbers $\{a_n\}$ such that $a_n \rightarrow 0^-$ when $n$ goes to infinity, and for all $n \geq 1$ it happens that $\rho(Z_l^{a_n} )  \leq \rho(Z_l) - \delta $. Since $\rho(K)$ is compact, the sequence $\{ \rho(Z_l^{a_n}) \}$ has some convergent subsequence  $\{ \rho(Z_l^{a_{n_k}}) \}$ to some limit $y$. Let us call $x$ to the value in $K$ such that $\rho(x)=y$. Since $\rho^{-1}$ is also continuous, it happens that $\{ Z^{a_{n_k}} \}$ converges to $x$. Note that $\rho(x)  = \lim_{k \rightarrow \infty} \rho(Z_l ^{a_{n_k} } )  \leq \rho(Z_l) - \delta$, which implies that $x < Z_l$.

Recall Equation \eqref{eqdif}:
\begin{equation}\label{eqrhoh}
0 \leq  a_{n_k} [  \rho(Z_l^{a_{n_k}} ) - \rho(Z_l) ] +   h((Z_l^{a_{n_k}})^o) - h(Z_l^o),
\end{equation}
using that $\rho(Z_l^{a_n} )  \leq \rho(Z_l) - \delta$ we obtain
$$a_{n_k}  \delta \leq  a_{n_k} [ \rho(Z_l) - \rho(Z_l^{a_{n_k } }) ]  \leq h((Z_l^{a_{n_k}})^o)- h(Z_l^o).$$
This means that for every $k$, when can find some sequence $\{x_k\}$ in $K$ such that $|x_k - Z_l^{a_{n_k}} | < \frac{1}{k} $ and satisfies that 
$h(x_k) \geq a_{n_k}  \delta + h(Z_l^o) - \frac{1}{k}$. The convergence of $Z^{a_{n_k}}$ to $x$ implies the convergence of $\{x_k\}$ to $x$. Then we conclude that $x$ is in the set $\mathcal Z (h)$ and $x < Z_l$ which is a contradiction. The proof for $Z_r$ is analogue.\\

Now, we prove the second part of the lemma. From \eqref{eqbas} we have that for $j$ in  $\{l,r\}$
\begin{eqnarray*}
	S - S^a + a \rho (Z_j) &\leq& 0, \\
	S - S^a + a \rho (Z_j^a) &\geq& 0. 
\end{eqnarray*}
For the case $a <0$ and $j=l$ this implies that 
\begin{equation}\label{eqsandw}
- \infty < \rho( \inf K ) - \rho( \sup K) \leq \rho(Z_l^a) - \rho(Z_l) \leq  \frac{S^a-S}{a} - \rho (Z_l) \leq 0 .
\end{equation}
Similarly, for the case $a>0$ and $j=r$:
$$ 0 \leq \frac{S^a-S}{a} - \rho (Z_r) \leq \rho(Z_r^a) - \rho(Z_r) \leq \rho( \sup K ) - \rho( \inf K ) < \infty .$$
By taking the limit in \eqref{eqsandw} when $a \rightarrow 0^-$ we obtain that
$$ \lim_{a \rightarrow 0^-} \rho(Z_l^a) - \rho(Z_l) \leq \lim_{a \rightarrow 0^-} \frac{S^a-S}{a} - \rho (Z_l) \leq 0,$$
and since $\lim_{a \rightarrow 0 ^-} \rho(Z_l^a)=\rho (Z_l)$, by the first part of the lemma and the continuity of $\rho$, the result follows. Taking the limit when $a \rightarrow 0^+$ leads to the analogous result for $\rho(Z_r)$. 
\end{proof}

Now, we return to the case of an arbitrary dimension $d$. For our purposes we will consider $h: K\subseteq \mathbb{R}^d \rightarrow \mathbb{R}$ to be a continuous function with a non-empty compact set $K$ as a domain. Similarly to the preceeding, let
$ S(h)$ be  the supremum of the values of the function $h$ and the set of maximizers
$$ \mathcal{Z} (h) := \{ z \in K : h(z) = S(h) \} .$$ 
For $1 \leq i \leq d$ put
$$ \mathcal{Z}^i (h) := \{ x \in \mathbb{R} \, :  \, \exists z \in K \textrm{ such that }  h(z)=S, \pi _i(z) =x \} $$
where $\pi_i$ denotes the projection on the $i$-coordinate. Define also
$$  Z^i_l (h) := \inf \mathcal{Z}^i (h) , \qquad Z^i_r :=\sup \mathcal{Z}^i (h)  , \qquad \forall 1 \leq i \leq d.$$

\begin{lem}\label{lem:mult}
For $a$ real and $1 \leq i \leq d$ define the function $h^{a,i} : K \subseteq \mathbb{R}^d \rightarrow \mathbb{R}$ by 
$$ h^{a,i} (z) := h(z) +  a \rho_i(z_i)   ,$$
 for $z=(z_1,...,z_d)$, where $\rho_i$ is an increasing real function. Then
$$ \lim_{ a \rightarrow 0^- } Z^{i}_l ( h^{a,i} ) = Z^{i}_l ( h) , \quad \lim_{a \rightarrow 0^+} Z^{i}_r (h^{a,i} ) = Z^i_r ( h) .$$
Moreover,
$$ \lim_{a \rightarrow 0^-} \frac{ S(h^{a,i}) -S(h)}{a}= 
\rho_i (Z^i_l ( h) ), \qquad \lim_{ a \rightarrow 0^+ } \frac{S(h^{a,i}) -S(h) }{a}= \rho_i (Z^i_r ( h) ). $$
\end{lem}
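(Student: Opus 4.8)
The plan is to mirror the proof of Lemma \ref{lem:one}, with the simplification that continuity of $h$ (in place of the c\`adl\`ag assumption) makes all suprema genuine maxima and removes the directional-limit bookkeeping, and with the new ingredient that we track only the $i$-th coordinate of the maximizers. Throughout write $Z^i_j := Z^i_j(h)$ and $Z^{i,a}_j := Z^i_j(h^{a,i})$ for $j \in \{l,r\}$, and $S := S(h)$, $S^a := S(h^{a,i})$. Since $h$ and $h^{a,i}$ are continuous on the compact set $K$, the maximizer sets $\mathcal{Z}(h)$ and $\mathcal{Z}(h^{a,i})$ are nonempty and compact, and their continuous projections $\mathcal{Z}^i(h)=\pi_i(\mathcal{Z}(h))$ and $\mathcal{Z}^i(h^{a,i})$ are compact; hence the infima and suprema $Z^i_j$, $Z^{i,a}_j$ are attained.

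First I would record the basic inequality. Picking a maximizer $w$ of $h$ with $\pi_i(w)=Z^i_j$ and a maximizer $w^a$ of $h^{a,i}$ with $\pi_i(w^a)=Z^{i,a}_j$, comparing $h(w)+a\rho_i(\pi_i(w))$ and $h(w^a)+a\rho_i(\pi_i(w^a))$ against the supremum $S^a$, and using $h(w^a)\le S$, yields the analogue of \eqref{eqbas},
$$ S + a\rho_i(Z^i_j) \;\le\; S^a \;\le\; S + a\rho_i(Z^{i,a}_j), \qquad j \in \{l,r\}. $$
Comparing the two outer terms gives $0 \le a\big(\rho_i(Z^{i,a}_j)-\rho_i(Z^i_j)\big)$, so that $Z^{i,a}_l \le Z^i_l$ for $a<0$ and $Z^{i,a}_r \ge Z^i_r$ for $a>0$ (using that $\rho_i$ is strictly increasing). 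Boundedness of $\rho_i$ on $\pi_i(K)$ together with the same sandwich also forces $S^a \to S$ as $a \to 0$.

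For the first assertion I would argue by compactness. Fix a sequence $a_n \to 0^-$ and maximizers $w^{a_n}$ of $h^{a_n,i}$ with $\pi_i(w^{a_n}) = Z^{i,a_n}_l$; passing to a subsequence, $w^{a_n} \to x^\ast \in K$. Since $h$ is continuous, $\rho_i$ is bounded on $\pi_i(K)$, and $S^{a_n} \to S$, the identity $h(w^{a_n}) + a_n\rho_i(\pi_i(w^{a_n})) = S^{a_n}$ passes to the limit to give $h(x^\ast) = S$, so $x^\ast$ is a maximizer of $h$ and $\pi_i(x^\ast) \in \mathcal{Z}^i(h)$, whence $\pi_i(x^\ast) \ge Z^i_l$. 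On the other hand $\pi_i(x^\ast) = \lim_n Z^{i,a_n}_l \le Z^i_l$ by the monotonicity above. Thus every subsequential limit equals $Z^i_l$, and since the $Z^{i,a}_l$ lie in the compact $\pi_i(K)$, the full limit $\lim_{a\to 0^-} Z^{i,a}_l = Z^i_l$ follows; the case $a \to 0^+$ for $Z^i_r$ is symmetric.

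The second assertion then follows by squeezing. For $a<0$ and $j=l$, dividing the sandwich by $a$ reverses the inequalities to give $\rho_i(Z^{i,a}_l) \le \tfrac{S^a-S}{a} \le \rho_i(Z^i_l)$; letting $a\to 0^-$ and invoking $Z^{i,a}_l \to Z^i_l$ with the continuity of $\rho_i$ forces $\tfrac{S^a-S}{a} \to \rho_i(Z^i_l)$. The computation for $a>0$, $j=r$ is identical up to the sense of the inequalities and produces $\rho_i(Z^i_r)$. The main obstacle is the compactness lifting step in the third paragraph: a convergent sequence of extremal coordinates need not a priori arise from a convergent sequence of full maximizers, so one must extract the subsequence in $K$ itself and verify that its limit is a genuine maximizer of $h$ --- this is exactly where continuity of $h$ (together with $S^a\to S$) replaces the c\`adl\`ag directional-limit argument used in Lemma \ref{lem:one}.
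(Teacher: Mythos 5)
Your proof is correct, but it takes a genuinely different route from the paper. The paper reduces the $d$-dimensional statement to Lemma \ref{lem:one} by introducing the partial-maximum function $f_i(x):=\max_{z\in E^i_x}h(z)$ on $\pi_i(K)$, proving (the ``Claim'') that $f_i$ is continuous, and then observing $S(f_i+a\rho_i)=S(h^{a,i})$ and $\mathcal{Z}^i(h)=\mathcal{Z}(f_i)$; the bulk of the work is the continuity of $f_i$. You instead rerun the one-dimensional argument directly in $K\subseteq\R^d$: the same sandwich $S+a\rho_i(Z^i_j)\le S^a\le S+a\rho_i(Z^{i,a}_j)$, the sign/monotonicity consequences, a compactness extraction of a convergent sequence of \emph{full} maximizers $w^{a_n}\in K$ whose limit is shown to be a maximizer of $h$ (using $S^{a_n}\to S$ and continuity of $h$), and then the squeeze for the difference quotients. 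Both arguments are sound where they apply, but yours buys something real: it never needs the continuity of $f_i$, and hence makes no implicit structural assumption on $K$. The paper's proof of the Claim constructs $z^*_y$ by replacing the $i$-th coordinate of $z_x$ with $y$, which need not lie in $K$ unless $K$ has a product-like structure (and indeed $f_i$ can fail to be continuous on $\pi_i(K)$ for general compact $K$, e.g.\ $K=\{(0,1)\}\cup([0,1]\times\{0\})$ with $h(z)=z_2$), whereas your direct compactness argument, which tracks only the projections of genuine maximizers, goes through for any non-empty compact $K$. The only point worth stating more carefully is the passage from ``every subsequential limit of $Z^{i,a_n}_l$ equals $Z^i_l$'' to the full limit: you should extract the convergent subsequence of $w^{a_n}$ \emph{inside} an arbitrary subsequence along which $Z^{i,a_n}_l$ is assumed bounded away from $Z^i_l$, which is exactly the standard contradiction argument and is clearly what you intend.
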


\begin{proof}

Fix $1 \leq i \leq d$ and set 
$$E_x^i:=  \{ z \in K : \pi_i(z) = x \}.$$
For those real numbers $x$ such that $E_x^i$ is non empty define the function
$$ f_i(x) :=  \max_{ z \in E_x^i } h(z) .$$
We will verify that $f_i$ satisfies the hypothesis to apply Lemma \ref{lem:one} and the result will follow from it.

\textbf{Claim:}
\begin{enumerate}
 \item For all $x$, $E_x^i$ is compact.
 \item The set $\pi_i(K)$ is a non-empty compact set.
 \item The function $f_i : \pi_i (K) \rightarrow \mathbb{R}$ is continuous.
\end{enumerate}

The first part of the Claim is immediate, since $E_x^i = \pi_i ^{-1} (\{x\})  \cap K$ is the intersection of two compact sets. The second part follows from $\pi_i(K)$ being the image under the continuous function $\pi_i$ of the non-empty compact set $K$. Let $\eps> 0$ and $x$ in  $\pi_i(K)$ fixed. We now prove that there exists $\delta>0$ such that for $|x-y| < \delta$ it happens that $|f_i(x) - f_i(y)| < \eps$.

Since $E_x^i$ is a non-empty compact set and $h$ is continuous, there exists $z_x \in E_x^i$ such that
$$ h(z_x) = \max_{z \in E_x^i} h(z) .$$
By the continuity of $h$, there exists $\delta >0$ such that for any $ u $ with 
$  || u - z || < \delta$ it happens $ |h(u) - h(z) | < \varepsilon$, where $|| (u_1,...,u_n)||:= \max |u_i |$. 

Let $y$ in $\pi_i(K)$ be such that $ |y-x| < \delta$. Then, there exists $z_y \in K$ such that $h(z_y) = \max_{ z \in E_y^i} h(z)$. 
Suppose now that $ h(z_y) \leq h(z_x)$.  We define $z^*_y$ to be identically to the vector $z_x$ but in the $i$-coordinate has the value $y$. Since $h(z^y)$ is the maximum of the values of $h$ on the set $E_y^i$ we have
\begin{equation}\label{eqsandh}
 h(z^*_y) \leq h( z_y) \leq h(z_x).
\end{equation}
On the other hand
$$ || z^*_y - z^x || = |y - x| < \delta$$
and then $|h( z^*_y) - h(z^x)| < \varepsilon$. From this and \eqref{eqsandh} we conclude that $|f_i(x)- f_i(y)|= |h(z_x)-h(z_y)| < \varepsilon$. In the case that $ h(z_x) \leq h(z_y)$ we define $z^*_x$ to be identical to $z_y$ but in the $i$-coordinate has the value $x$. The proof is symmetrical so the Claim is proved.\\

Note  that $\mathcal{Z}^i (h)$ can be rewritten as 
$$ \mathcal{Z}^i (h) = \{ x \in \pi_i (K) : f_i(x) = S( f_i(x) ) \} = \mathcal{Z} (f_i), $$
and then $ Z_l (f_i) = \inf \mathcal{Z}^i (h)$ and  $Z_r (f_i) = \sup \mathcal{Z}^i (h)$. By the Claim, we can apply Lemma \ref{lem:one} to $f_i$ and we obtain
$$ \lim_{ a \rightarrow 0^- } Z^i_l ( f_i(x) + a \rho_i (x) ) = Z^i_l ( f_i(x) )  \quad  \lim_{ a \rightarrow 0^+ } Z^i_r ( f_i(x) + a \rho_i (x) ) = Z^i_r ( f_i(x) )   $$
and, 
\begin{eqnarray*}
 \lim_{a \rightarrow 0^-} \frac{ S(f_i(x) + a \rho_i(x)) -S(f_i(x)) }{a}  &=& 
 \rho_i (Z^i_l ( f_i(x) ) ),   \\ 
 \lim_{a \rightarrow 0^+} \frac{ S(f_i(x) + a \rho_i(x)) -S(f_i(x)) }{a}   &=& 
\rho_i (Z^i_r ( f_i(x) ) ).
\end{eqnarray*}
But
\begin{eqnarray*}
S(f_i(x) +a \rho_i(x) ) &=& \max_{x \in \pi_i(K) }  \Big\{  \max_{z \in E_x^i}  \{ h(z) \} + a \rho_i(x)   \Big\}  \\
&=& \max_{x \in \pi_i(K) }  \Big\{  \max_{z \in E_x^i} \{ h(z) + a \rho_i(x) \}  \Big\} \\
&=& \max_{z \in K} \{ h(z) + a \rho_i( \pi_i(z) ) \}    = S(h^{a,i}),
\end{eqnarray*}
and similarly $Z^i_l ( f_i(x) + a \rho_i (x) )=Z_l(h^{i,a})$, $Z^i_r ( f_i(x) + a \rho_i(x) )=Z_r(h^{a,i})$, so the result follows.
\end{proof}

\textbf{Proof of Theorem \ref{teo:nl} and Theorem \ref{teo:multnl}:} 
\begin{proof}
The proof follows the same lines of Theorem $1$ of \cite{Pi}. We write explicitly the proof of Theorem \ref{teo:multnl} to show the details in the multivariate setting. \\

We have that 
\begin{equation}\label{eqdom}
 \E|S^a| \leq \E|S| + \sum_{i=1}^{d} |a_i|  \max_{x \in \pi_i(K) } | \rho_i(x) | < \infty .
\end{equation}
If the gradient of $s$ exists at the origin, for all $1 \leq i \leq d$ we have that
$$ \frac{ \partial  s}{\partial a_i} \Big| _{a=\0}= \lim_{a_i \rightarrow 0^-} \frac{ \E ( S(h+a_i \rho_i) - S(h) ) }{a_i} = \lim_{a_i \rightarrow 0^+} \frac{ \E ( S(h+a_i \rho_i) - S(h) ) }{a_i} ,$$
and by \eqref{eqdom} we can use dominated convergence to conclude that
\begin{equation}\label{eqexp}
 \E \Big( \lim_{a_i \rightarrow 0^-} \frac{ S(h+a_i \rho_i) - S(h) }{a_i}  \Big)  = \E  \Big( \lim_{a_i \rightarrow 0^+} \frac{ S(h+a_i \rho_i) - S(h) }{a_i}  \Big) .
\end{equation}
From \eqref{eqexp} and the first part of Lemma \ref{lem:mult} we deduce that $\E( \rho_i(Z_l^i)) = \E( \rho_i (Z_r^i))$. Since $Z_l^i \leq Z_r^i$ and $\rho_i$ is increasing, we have that $\rho_i(Z_l^i) \leq \rho_i(Z_r^i)$. This fact combined with \eqref{eqexp}, implies that $\rho_i(Z_l^i) = \rho(Z_r^i)$ almost surely. Since $\rho_i$ is injective, we conclude that $Z_l^i = Z_r^i$ almost surely, for each coordinate $i$, and this implies the $a.s.$ uniqueness of the location $Z$. Note that in this case $\E( \rho_i (Z_i) ) =  \frac{ \partial  s}{\partial a_i} \Big| _{a=\0} $. \\

For the converse, assume that the location of the maximum is unique. For each $i$, we have that $\rho_i(Z_l^i) = \rho_i(Z_r^i)$, and by Lemma \ref{lem:mult} we have that
$$ \lim_{a_i \rightarrow 0^-} \frac{ S(h+a_i \rho_i) - S(h) }{a_i}   = \lim_{a_i \rightarrow 0^+} \frac{ S(h+a_i \rho_i) - S(h) }{a_i} .$$
The dominated convergence theorem implies that
$$ \frac{ \partial  s}{\partial a_i} \Big| _{a=\0}= \lim_{a_i \rightarrow 0^-} \frac{ \E ( S(h+a_i \rho_i) - S(h) ) }{a_i} = \lim_{a_i \rightarrow 0^+} \frac{ \E ( S(h+a_i \rho_i) - S(h) ) }{a_i} ,$$
which proves that the gradient of $s$ exists at $a=\0$.
\end{proof}

\subsection{L\'evy processes}

\textbf{Proof of Theorem \ref{Levy}:} 
\begin{proof}
The case where $Y(t) \equiv 0$ corresponds to Theorem $2$ in \cite{Pi}.  Consider the scenario where $\sigma>0$. Let $A$ be the event where $X$ attains its maximum in a unique position. Then
$$ \mathbb{E} ( 1_A) = \mathbb{E} (  \mathbb{E} ( 1_A | Y(t) =f(t)  )   ) , $$
where $f(t)$ is a function varying overall c\`{a}dl\`{a}g  functions in $[0,1]$. So, by proving uniqueness of the process $ct+ \sigma  B_t + f(t)$ for any c\`{a}dl\`{a}g  function $f$, we would be done for this case. But this follows directly from Lemma $2$ in \cite{Pi} and Theorem \ref{teo:nl} (in the same way as Theorem $2$ is proven in \cite{Pi}, the only difference here is that $f$ could be discontinuous). \\

Assume from now on that $\sigma=0$. Directly from the definition, one could check that  if $c>0$, for any $a$ such that $c+a>0$, it happens that $S^a=S+a$ so the quotient $\frac{S^a-S}{a}$ is equal to $1$, for all $a$ sufficiently small. Similarly, for $c<0$, it happens that $S^a=S+aL$ so the quotient $\frac{S^a-S}{a}$ here is equal to $L$ for any $a$. Then, by Theorem \ref{teo:nl}, we have uniqueness of the location. Observe also that the conclusion follows directly from the fact that the paths do not have negative jumps: for $c>0$ and $a$ small, the location of the supremum of $X^a$ is always at the point $1$, and for $c<0$ and $a$ small, the maximizer is located at the random point $L$.\\

In the case where $c=0$, $S^a=S+a$ for $a>0$ and $S^a=S+aL$ for $a<0$. Then
$$ \frac{S^a-S}{a}= 1_{(0,\infty)} (a) + L 1_{(-\infty, 0)}(a) .$$
Using Theorem \ref{teo:nl} we conclude the equality between the events $A$ and $\{ L =1 \}$. Define the reversed process $\tilde X$ by
$$ \tilde X(s) := X(1-s)^{-} - X(1)    , \quad \forall 0 \leq s \leq 1 .$$
By the assumption $\sigma = c =0$ we have that 
$$ L =  \inf \{ s > 0:  \tilde X(s) \neq 0  \}  ,$$
 and since the reversed process $\tilde X$ has the same law as $X$ (see Lemma 3.4, p. 77 in \cite{Kyp}) we conclude the last part of the result.
\end{proof}

\subsection{Gaussian processes}

Before proving an analogue to Lemma $2$ in \cite{Pi}, we state some results for bridges of gaussian processes, known as anticipative representations, which are based on Karhunen-Lo\`eve orthogonal decompositions for gaussian processes. See Proposition 4 in \cite{GasSo} and Remark 3.6 (ii) in \cite{SoYa}.

\begin{lem}\label{lemLaxM}
Let $\{X(z) : z \in K \}$ be a centered gaussian process defined on a compact $K \subseteq \mathbb R^d$ containing the origin such that $X(\0)=0$, with covariance function $R$. Fix a set of different values $\{t^1,...,t^n\} \subseteq K$ such that $R(t^i,t^j)>0$ for all $i,j$. Define the process $X^{t^1,...,t^n }(z)$ as the process $X$ conditioned on the event $\{X(t^1)=0,...,X(t^n)=0 \}$. For $1 \leq k \leq n$ we construct the processes $\{ X^k(t) \}$ recursively:
\begin{eqnarray*}
X^0(z) &:=&  X(z)  ,\\
X^k(z) &:= & X^{k-1} (z) -  \frac{ R_{k-1} (z, t^k) }{R_{k-1} (t^k,t^k) } X^{k-1} (t^k)      \qquad \forall \, 1 \leq k \leq n,
\end{eqnarray*}
where the functions $R_k$ are also recursively constructed:
\begin{eqnarray*}
R_0(u,v) &:=& R(u,v), \\
R_k(u,v) & := & R_{k-1}(u,v) - \frac{ R_{k-1} (u,t^k) R_{k-1} (v,t^k)} { R_{k-1}(t^k,t^k)}    \qquad \forall \, 1 \leq k \leq n.
\end{eqnarray*} 
Then $X^k$ has the same law as $X^{t^1,...,t^k}$  for all $0 \leq k \leq n$.
\end{lem}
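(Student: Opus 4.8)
The plan is to prove the statement by induction on $k$, reading the recursion as the sequential linear regression (Gram--Schmidt orthogonalization) of $X$ against the random variables $X(t^1),\dots,X(t^k)$ inside the Gaussian $L^2$ space. The guiding principle is the elementary Gaussian fact that, for a jointly Gaussian family, being uncorrelated is equivalent to being independent. Thus subtracting the $L^2$-projection onto $H_k:=\mathrm{span}\{X(t^1),\dots,X(t^k)\}$ yields a residual process genuinely \emph{independent} of $(X(t^1),\dots,X(t^k))$, and this is precisely what lets one identify the conditional law.

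First I would establish, by induction on $k$, that $R_k$ is the covariance function of $X^k$. The base case $R_0=R$ is the hypothesis. For the inductive step, assuming $R_{k-1}=\Cov(X^{k-1}(\cdot),X^{k-1}(\cdot))$, I would expand $\Cov(X^k(u),X^k(v))$ directly from $X^k(z)=X^{k-1}(z)-\frac{R_{k-1}(z,t^k)}{R_{k-1}(t^k,t^k)}X^{k-1}(t^k)$; the four resulting terms collapse to $R_{k-1}(u,v)-\frac{R_{k-1}(u,t^k)R_{k-1}(v,t^k)}{R_{k-1}(t^k,t^k)}=R_k(u,v)$, matching the recursion for $R_k$. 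The same relation shows $X^k(t^k)=0$ a.s., and this, combined with the induction hypothesis $X^{k-1}(t^j)=0$ for $j<k$ and $R_{k-1}(t^j,t^k)=\Cov(X^{k-1}(t^j),X^{k-1}(t^k))=0$, gives $X^k(t^j)=0$ a.s. for every $j\le k$.

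Next I would identify $X^k(z)=X(z)-\Pi_k X(z)$, where $\Pi_k$ is the $L^2$-orthogonal projection onto $H_k$; this follows by induction from the orthogonal splitting $H_k=H_{k-1}\oplus\R\,X^{k-1}(t^k)$, the subtracted term being exactly the projection of $X^{k-1}(z)=X(z)-\Pi_{k-1}X(z)$ onto the fresh orthogonal direction $X^{k-1}(t^k)=X(t^k)-\Pi_{k-1}X(t^k)$. Since $X$ is Gaussian and $\Pi_k$ is linear, the family $\big((X^k(z))_z,X(t^1),\dots,X(t^k)\big)$ is jointly Gaussian with $X^k(z)\perp H_k$ for every $z$, so the entire process $(X^k(z))_z$ is independent of $(X(t^1),\dots,X(t^k))$. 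Writing $X(z)=X^k(z)+\sum_{j=1}^k c_j(z)X(t^j)$ (a genuine linear combination, as $H_k$ is finite-dimensional and closed), the regular conditional law of $(X(z))_z$ given $(X(t^1),\dots,X(t^k))=(x_1,\dots,x_k)$ is that of $\big(X^k(z)+\sum_j c_j(z)x_j\big)_z$; putting $x_j=0$ for all $j$ yields $X^{t^1,\dots,t^k}\eqdi X^k$, which is exactly the rigorous reading of conditioning on the null event $\{X(t^1)=0,\dots,X(t^k)=0\}$, and gives the claim for all $0\le k\le n$.

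The step I expect to be most delicate is the well-definedness of the recursion itself, namely the positivity of the conditional variances $R_{k-1}(t^k,t^k)=\Var\big(X^{k-1}(t^k)\big)$ sitting in the denominators. This is the residual variance of $X(t^k)$ after regressing onto $H_{k-1}$, so it is strictly positive exactly when $X(t^k)\notin H_{k-1}$, i.e.\ when the Gaussian vector $(X(t^1),\dots,X(t^k))$ is non-degenerate; the bare hypothesis $R(t^i,t^j)>0$ does not by itself guarantee this, but the required non-degeneracy holds in every application below, where the covariance matrix of $(X(t^1),\dots,X(t^n))$ is assumed invertible. The remaining delicate point, that one conditions on a probability-zero event, is dissolved by the independence of the residual $X^k$ from the conditioning variables, which turns the conditional law into the unconditional law of $X^k$.
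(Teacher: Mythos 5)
Your proof is correct. The paper itself offers no proof of this lemma: it is stated as a known result, with a pointer to Proposition 4 of the Gasbarra--Sottinen--Valkeila paper and to Sottinen--Yazigi, and only described as ``based on Karhunen--Lo\`eve orthogonal decompositions.'' Your argument --- reading the recursion as sequential Gram--Schmidt regression onto $\mathrm{span}\{X(t^1),\dots,X(t^k)\}$, verifying by the covariance computation that $R_k=\Cov(X^k(\cdot),X^k(\cdot))$ and $X^k(t^j)=0$ for $j\le k$, and then using joint Gaussianity to upgrade orthogonality of the residual to independence, so that the regular conditional law of $X$ given $(X(t^1),\dots,X(t^k))=\0$ is the unconditional law of $X^k$ --- is exactly the standard mechanism behind those references, so you have in effect supplied the proof the paper outsources. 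Two remarks. First, your final step deserves one more sentence: the identity $X^{t^1,\dots,t^k}\eqdi X^k$ is a statement about a \emph{version} of the conditional law on a null event, and one should say that the regular conditional distribution is chosen continuously in $(x_1,\dots,x_k)$ (as the Gaussian formula provides), which is also how the paper's Proposition 2 uses it. Second, your observation that the hypothesis $R(t^i,t^j)>0$ does \emph{not} guarantee $R_{k-1}(t^k,t^k)>0$ --- take $X(t^2)=X(t^1)$ a.s.\ with positive variance and the first denominator in the $k=2$ step vanishes --- is a genuine flaw in the lemma as stated; the correct hypothesis is non-degeneracy (invertibility) of the covariance matrix of $(X(t^1),\dots,X(t^n))$, which is indeed what is assumed in every application in Section 2 and in Proposition 2.
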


Based on this representation, we are able to express the distribution of a gaussian process as the sum of a gaussian bridge plus some independent normal variables weighted by some deterministic functions.

\begin{prop}\label{propgauss}
Let $K \subseteq \mathbb {R}^d$ be a non-empty compact set containing the origin and a centered Gaussian process $X$ defined on $K$, with covariance function $R$. Let $t^1,...,t^d$ be values in $K$ such that the vector $(X(t^1),...,X(t^d))$ has an invertible diagonal covariance matrix $\Sigma$. Then there exist functions $\gamma^i : K \rightarrow \mathbb R$ (which depend on $t^1,...,t^d$) which satisfy $\gamma^i(t^j) = \delta_{i,j}$ for all $ 1 \leq i, j \leq d$. Also, those functions are such that the next equality in distribution of processes holds:
$$ X(z) \eqdi X^{t^1,...,t^d} (z) + \sum_{i=1}^{d} \gamma^i (z) N_i ,$$
where $X^{t^1,...,t^d}$ has the distribution of $X$ conditioned on the event $\{X(t^1)=0,...,X(t^d)=0 \}$ and, for $ 1 \leq i \leq d$, $N_i$ is a centered normal variable with variance $R(t^i,t^i)$, where all random elements are independent.
\end{prop}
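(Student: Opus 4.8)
The plan is to apply the recursive construction of Lemma \ref{lemLaxM} with $n = d$ and the points $t^1, \dots, t^d$, and then to invert that recursion, recovering $X$ as $X^d$ plus a weighted sum of the variables $X^{k-1}(t^k)$. Although Lemma \ref{lemLaxM} is stated under the hypothesis $R(t^i,t^j) > 0$, the recursion itself only requires $R_{k-1}(t^k,t^k) > 0$ at each step so that no division by zero occurs, and I would first verify that this weaker condition holds here. The key structural fact, to be proved by induction on $m$, is that the diagonal hypothesis propagates through the recursion: for $i,j > m$ one has $R_m(t^i,t^j) = R(t^i,t^i)\,\delta_{i,j}$, while $R_m(t^j,\cdot) \equiv 0$ as soon as $j \le m$. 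In particular $R_{k-1}(t^k,t^k) = R(t^k,t^k) > 0$ by the invertibility of $\Sigma$, so the construction is well defined and Lemma \ref{lemLaxM} yields $X^d \eqdi X^{t^1,\dots,t^d}$.

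Next I would set $N_k := X^{k-1}(t^k)$ and $\gamma^k(z) := R_{k-1}(z,t^k)/R_{k-1}(t^k,t^k)$. Rearranging the defining recursion gives $X^{k-1}(z) - X^k(z) = \gamma^k(z)\,N_k$, and telescoping from $k=1$ to $d$ produces the identity $X(z) = X^d(z) + \sum_{k=1}^d \gamma^k(z)\,N_k$. The orthogonality propagation then collapses every ingredient to a simple closed form: because each correction term in the recursion for $X^{j}(t^k)$ carries the factor $R_{j-1}(t^k,t^j)$, which vanishes for $j < k$, the variable $N_k = X^{k-1}(t^k)$ reduces to $X(t^k)$, a centered normal with variance $R(t^k,t^k)$; and because the same vanishing shows $R_{i-1}(z,t^i) = R(z,t^i)$ for every $z$, the weights become $\gamma^i(z) = R(z,t^i)/R(t^i,t^i)$, whence $\gamma^i(t^j) = R(t^j,t^i)/R(t^i,t^i) = \delta_{i,j}$, as required.

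It remains to establish the independence structure, and the only tool needed is that the whole family $\{X^d(z) : z \in K\} \cup \{N_1,\dots,N_d\}$ consists of linear functionals of $X$ and is therefore jointly Gaussian, so that independence reduces to the vanishing of covariances. The variables $N_i = X(t^i)$ are mutually independent precisely because $\Sigma$ is diagonal. For the independence of $X^d(\cdot)$ from $(N_1,\dots,N_d)$, I would use $X^d(z) = X(z) - \sum_{i} \gamma^i(z) N_i$ together with the diagonal structure to compute
\[
\Cov\big(X^d(z), X(t^i)\big) = R(z,t^i) - \gamma^i(z)\,R(t^i,t^i) = R(z,t^i) - R(z,t^i) = 0
\]
for every $z$ and every $i \le d$; joint Gaussianity then gives $X^d(\cdot) \perp (N_1,\dots,N_d)$. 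Finally, since $X^d \eqdi X^{t^1,\dots,t^d}$ and is independent of the $N_i$, replacing $X^d$ by an independent copy of the bridge leaves the law of the sum $X^d(z) + \sum_i \gamma^i(z) N_i$ unchanged, which is exactly the claimed equality in distribution.

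The main obstacle, I expect, is not any individual computation but the bookkeeping of the recursion: one must check carefully that the diagonal hypothesis is preserved at every stage (the induction on $R_m$), since it is this propagation that simultaneously keeps the construction non-degenerate and collapses the weights and the normal variables to their explicit forms. A secondary point worth stating explicitly is the reconciliation with Lemma \ref{lemLaxM}, whose stated assumption $R(t^i,t^j) > 0$ is used only to guarantee $R_{k-1}(t^k,t^k) > 0$; in the present setting that positivity follows instead from the invertibility of the diagonal matrix $\Sigma$.
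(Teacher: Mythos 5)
Your proof is correct, and it takes a noticeably different route from the paper's. The paper proceeds by an Ansatz: it writes $\tilde X(z) = X^{t^1,\dots,t^d}(z) + \sum_i \gamma^i(z) N_i$ with independent ingredients, reduces the equality in law to the covariance identity $R(z,v) = R_d(z,v) + \sum_j \gamma^j(z)\gamma^j(v)\Var(N_j)$, and determines $\gamma^i$ and $\Var(N_i)$ by evaluating at $v=t^i$ (where $R_d(\cdot,t^i)=0$); the verification for general $(z,v)$ is left implicit. You instead build the decomposition constructively: you telescope the recursion of Lemma \ref{lemLaxM} to obtain the \emph{almost-sure} identity $X(z) = X^d(z) + \sum_k \gamma^k(z) N_k$ with $N_k = X^{k-1}(t^k)$, use the induction on the propagation of the diagonal structure of $\Sigma$ to collapse $N_k$ to $X(t^k)$ and $\gamma^k$ to $R(\cdot,t^k)/R(t^k,t^k)$, and then get independence from joint Gaussianity and a direct covariance computation. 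Your version buys two things the paper's terser argument does not make explicit: (i) the sufficiency of the choice \eqref{def:gamma} for all pairs $(z,v)$, which in the paper's covariance-matching scheme would require exactly the same induction on $R_m$ that you carry out; and (ii) an explicit reconciliation with the hypothesis $R(t^i,t^j)>0$ of Lemma \ref{lemLaxM}, which is actually \emph{violated} here since $\Sigma$ diagonal forces $R(t^i,t^j)=0$ for $i\neq j$ --- you correctly observe that only $R_{k-1}(t^k,t^k)>0$ is needed for the recursion, and that this follows from invertibility of $\Sigma$. This is a point the paper silently skips, and your handling of it strengthens rather than weakens the argument. The one thing you rely on without full proof is that the conclusion $X^d \eqdi X^{t^1,\dots,t^d}$ of Lemma \ref{lemLaxM} survives under the weakened hypothesis; since the identification of the conditioned process is standard Gaussian conditioning and only requires invertibility of the relevant covariance matrices, this is a safe step, but it deserves the explicit sentence you give it.
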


The proof of Proposition \ref{propgauss} is straightforward, based on Lemma \ref{lemLaxM}. The functions
\begin{equation}\label{def:gamma}
\gamma^i (z):= \frac{R(z,t^i)}{R(t^i,t^i)}  \qquad \forall 1 \leq i \leq d,
\end{equation}
satisfy the conditions stated in Proposition \ref{propgauss}. The details are presented in the Appendix.  \\

We are in conditions to present a generalization of Lemma $2$ in \cite{Pi} for Gaussian processes.

\begin{lem}\label{lem1gau}
Let $K \subseteq \mathbb {R}^d$ be a non-empty compact set containing the origin and $\{X(z) : z \in K \}$ be a centered Gaussian process, with covariance function $R$. Let $Y=Y(X)$ be some measurable functional of $X$ on $K$ such that $\E(Y^2) < \infty$ and fix some values $t^1,...,t^d$ in $K$ such that 
\begin{enumerate}
\item the gaussian vector $(X(t^1+a_1 \ee^1),...,X(t^d+a_d \ee^d))$ has invertible covariance matrix $\Sigma_a$, for all $a=(a_1,...,a_d)$ in a vicinity around zero (here $\{ \ee^i\}$ is the canonical base), and
\item the vector $(X(t^1),...,X(t^d))$ has a diagonal invertible covariance matrix $\Sigma_\0=\{ \sigma_{i,j} \}$.
\end{enumerate}
For those vectors $a$'s, define
$$ X^a(z) :=  X(z) + \sum_{i=1}^{d} a_i \gamma^i(z),  \quad \forall z \in K,$$
where the $\gamma^i(z)$ are defined by \eqref{def:gamma}, $Y^a:=Y(X^a)$ and $y(a):= \E(Y^a)$.
Then 
$$ \nabla y(\0) = \Big( \frac{\Cov(Y, X(t^1))}{\sigma_{1,1} }  , ... , \frac{\Cov(Y, X(t^d))}{\sigma_{d,d} }  \Big). $$
\end{lem}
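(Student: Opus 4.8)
The plan is to reduce everything to a finite-dimensional Gaussian shift by invoking the anticipative representation of Proposition \ref{propgauss}. Writing $B:=X^{t^1,\dots,t^d}$ for the Gaussian bridge and $N=(N_1,\dots,N_d)$ for the independent centered normals with variances $\sigma_{ii}=R(t^i,t^i)$, Proposition \ref{propgauss} gives $X(z)\eqdi B(z)+\sum_{i=1}^d\gamma^i(z)N_i$, with $B$ independent of $N$. The key observation is that adding the deterministic perturbation $\sum_i a_i\gamma^i(z)$ is, in distribution, exactly the same as shifting each normal coordinate: $X^a(z)\eqdi B(z)+\sum_i\gamma^i(z)(N_i+a_i)$. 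Hence, setting $\tilde Y(B,m):=Y\big(B+\sum_i\gamma^i m_i\big)$, one has $Y^a\eqdi\tilde Y(B,N+a)$ and $y(a)=\E[\tilde Y(B,N+a)]$, so the whole problem becomes differentiating the expectation of a fixed functional under a translation of a nondegenerate Gaussian vector.

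Next I would turn the translation into a likelihood ratio (a Cameron--Martin / change--of--variables step). Conditioning on $B$ and substituting in the Gaussian density $p$ of $N$, one gets
\[
y(a)=\E\Big[\tilde Y(B,N)\,\frac{p(N-a)}{p(N)}\Big]=\E[\,Y\,L_a\,],
\qquad
L_a:=\exp\Big(\sum_{i=1}^d\frac{a_iN_i}{\sigma_{ii}}-\frac{a_i^2}{2\sigma_{ii}}\Big),
\]
using that $\tilde Y(B,N)\eqdi Y(X)=Y$. Because each factor of $L_a$ is log-normal, $L_a$ has moments of every order, and a direct computation gives $\E[L_a^2]=\exp(\sum_i a_i^2/\sigma_{ii})$, which stays bounded as $a$ ranges over a bounded neighborhood $U$ of $\0$. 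Differentiating $L_a$ formally yields $\partial_{a_j}L_a=L_a(N_j-a_j)/\sigma_{jj}$, so at $a=\0$ we expect $\partial_{a_j}y(\0)=\E[Y\,N_j]/\sigma_{jj}$.

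The main obstacle is justifying the interchange of derivative and expectation at $\0$, and this is where the hypothesis $\E(Y^2)<\infty$ together with the nondegeneracy condition (1) are used: condition (1) guarantees that $L_a$ (equivalently, the perturbed Gaussian law) stays well defined and nondegenerate throughout $U$, so the above bounds hold uniformly. I would control the difference quotients $(L_{a+h\ee^j}-L_a)/h$ by the mean value theorem and dominate them, uniformly for small $h$, by the $L^2$ function $\sup_{a\in U}|\partial_{a_j}L_a|$; combined with $Y\in L^2$ and the Cauchy--Schwarz inequality, dominated convergence then gives $\partial_{a_j}y(\0)=\E[Y\,\partial_{a_j}L_\0]=\E[Y\,N_j]/\sigma_{jj}$. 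Finally I would identify $N_j$ with $X(t^j)$: evaluating the representation at $t^j$ and using $\gamma^i(t^j)=\delta_{ij}$ together with $B(t^j)=0$ a.s.\ (the bridge is pinned at $t^j$) gives $X(t^j)=N_j$, so $(Y,N_j)\eqdi(Y(X),X(t^j))$. Since $X$ is centered, $\E[Y\,N_j]=\E[Y\,X(t^j)]=\Cov(Y,X(t^j))$, and therefore $\partial_{a_j}y(\0)=\Cov(Y,X(t^j))/\sigma_{jj}$ for every $j$, which is exactly the claimed formula for $\nabla y(\0)$.
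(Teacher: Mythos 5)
Your proof is correct, and it shares the paper's backbone --- both arguments start from the anticipative representation of Proposition \ref{propgauss} and exploit the fact that the perturbation $\sum_i a_i\gamma^i$ is, in law, a shift of the independent normals $N_i$ --- but the execution differs in a worthwhile way. The paper disintegrates over the values $u=(X^a(t^1),\dots,X^a(t^d))$, writes $y(a)=\int \E(Y^a\mid X^a(t^i)=u_i)\,K_u(a)\,du$, and differentiates the Gaussian density $K_u(a)$ in its mean parameter, asserting the interchange of derivative and integral without justification. You instead fold the same mean-shift into a Cameron--Martin likelihood ratio $y(a)=\E[Y L_a]$ with $L_a=\exp\big(\sum_i a_iN_i/\sigma_{ii}-a_i^2/(2\sigma_{ii})\big)$ and differentiate the exponential tilt directly; the payoff is that your domination of the difference quotients by $\sup_{a\in U}|\partial_{a_j}L_a|$ (which lies in $L^2$ since $e^{c|N|}|N|$ has all moments) combined with $\E(Y^2)<\infty$ and Cauchy--Schwarz supplies exactly the rigorous interchange step the paper glosses over. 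One small inaccuracy: you credit hypothesis (1) with keeping the perturbed law nondegenerate on $U$, but since $X^a(t^i)=X(t^i)+a_i$ the perturbation only shifts the mean and the relevant covariance is $\Sigma_{\0}$ for every $a$, so your argument in fact only uses hypothesis (2); this mirrors a notational quirk in the paper, where the $\Sigma_a$ appearing in $K_u(a)$ likewise coincides with $\Sigma_{\0}$. The final identification $N_j=X(t^j)$ via $\gamma^i(t^j)=\delta_{ij}$ and the pinning of the bridge is exactly as in the paper.
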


\begin{proof}

We have that $X$ is equal in distribution as a process to
$$ X(z) \eqdi X^{t^1, ..., t^d} (z) + \sum_{i=1}^{d} \gamma^i (z) N_i ,$$
with all the elements defined as in Proposition \ref{propgauss}.

Then
\begin{equation}\label{gaussiansum}
 X^a(z) \eqdi X^{t^1,...,t^d} (z) + \sum_{i=1}^{d} \gamma^i(z) N_i  + \sum_{i=1}^{d} a_i \gamma^i(z) .
\end{equation}
Let $u=(u_1,...,u_d)$ be some element in $\mathbb R ^d$. Conditioned on the event $\{X^a(t^1)=u_1, ...,  X^a(t^d) =u_d\}$ it must happen that $a_1 + N_1 = u_1,...,a_d+N_d=u_d$ (evaluate \eqref{gaussiansum} on the points $t^1,...,t^d)$. It follows that
\begin{equation}\label{nondep}
 X^u(z) \eqdi X^{t^1,...,t^n} (z) + \sum_{i=1}^{d} u_i \gamma^i(z),
\end{equation}
on that event. \\

Define the density $K_u(a)$ by
$$K_u(a) := \frac{1}{(2 \pi)^{ \frac{d}{2} }} | \Sigma_a | ^{-\frac{1}{2}} \exp \Big\{ -\frac{1}{2} (u-a)' \Sigma_a^{-1}(u-a) \Big\} ,$$
where $\Sigma_a$ is the covariance matrix of the vector $(X^a(t^1),..., X^a(t^d))$.
By noting that 
$$(u-a)' \Sigma_a^{-1}(u-a) = \sum_{i=1}^{d} \sum_{j=1}^{d} (a_i-u_i) (a_j-u_j) \tilde \sigma^a_{i,j} ,$$
 where $\tilde \sigma^a_{i,j}$ is the $(i,j)$-entry of $\Sigma_a^{-1}$, we can compute the partial derivative of $K_u(a)$ respect to $a_k$, $1\leq k \leq d$:
$$ \frac{ \partial K_u(a)}{ \partial a_k} = K_u(a) \Big( -\frac{1}{2}  \sum_{i=1}^{d} \sum_{j=1}^{d} \tilde \sigma^a_{i,j} [ (a_i-u_i) \delta_{k,j} + (a_j-u_j) \delta_{i,k} ] \Big) = K_u(a) \Big( \sum_{i=1}^{d} \tilde \sigma^a_{i,k} (u_i -a_i) \Big) ,$$
where we used that $\Sigma_a$ is symmetric (so it is $\Sigma_a^{-1}$). 

We can express 
$$y(a)= \int \cdots \int \E (Y^a | X^a(t^1)=u_1, ..., X^a(t^d)=u_d  ) K_u(a) du_1 ... du_d$$ 
so, by interchanging derivative and integral operations,
\begin{eqnarray*}
 \frac{ \partial y(a)}{ \partial a_k} & =& \int \cdots \int  \frac{ \partial}{ \partial a_k} [ \E (Y^a | X^a(t^1)=u_1, ..., X^a(t^d)=u_d ) K_u(a) ] du_1 ... du_d \\
&=& \int \cdots \int \E (Y^a | X^a(t^1)=u_1, ..., X^a(t^n)=u_d ) \frac{ \partial K_u(a) }{ \partial a_k} du_1 ...du_d,
\end{eqnarray*}
where last equality is due to the non-dependence of $\E (Y^a | X^a(t^1)=u_1,..., X^a(t^d)=u_d)$ on the parameter $a$ given $(u_1,...,u_d)$, expressed in \eqref{nondep}. By substituting the derivative of $K_u(a)$ we obtain
$$ \frac{ \partial y(a)}{ \partial a_k} = \sum_{i=1}^{d} \tilde \sigma^a_{i,k} ( \E(Y^a X^a(t^i)) - a_i \E(Y^a)),$$
and the result follows by putting $a=\0$.
\end{proof}

\textbf{Proof of Theorem \ref{theo:gauss}}:

\begin{proof} 

By hypothesis, the  gaussian vector $(X(t^1),...,X(t^d))$ has an invertible diagonal covariance matrix given by 
$$\Sigma:= \{ \Cov( X(t^i), X(t^j) ) \}_{i,j} = \{ \sigma_{i,i} \delta_{i,j} \}_{i,j},$$
for some positive constants $\{\sigma_{i,i} \}$. For $a$ in $\mathbb{R}^d$ define the process $X^a$ by
\begin{equation}\label{eq:pert}
X^a(z) :=  X(z) + \sum_{i=1}^{d} a_i \frac{ R(z, t^i  )}{R(t^i,t^i) } \qquad \forall \, z \in K.
\end{equation}
 The functions $\Big\{  \frac{ R(z, t^i  )}{R(t^i,t^i) }   \Big\}_{i=1}^{d}$ satisfy the conditions stated in Proposition \ref{propgauss}. Therefore, if we denote the maximum of $X^a$ by $s(a)$, Lemma \ref{lem1gau} implies that 
$$ \nabla s (\0) = \Big( \frac{\partial s(a) }{ \partial a_i} \Big|_{a_i=0}  \Big)_{i=1}^d = \Big( \frac{1}{\sigma_{1,1}} \Cov(M,X(t^1), ..., \frac{1}{\sigma_{d,d}} \Cov(M,X(t^d) ) \Big) .$$
On the other hand, if we define $u(a)= \Big( \frac{a_1}{\sigma_{1,1}}, ..., \frac{a_d}{\sigma_{d,d}} \Big)$, \eqref{eq:pert} can be rewritten as
$$ X^u(z)= X(z) + \sum_{u=1}^{d} u_i R(z,t^i).$$
Since 
$$ \frac{ \partial s(a) }{ \partial a_i} = \frac{ \partial s(u) }{ \partial u_i} \cdot \frac{ \partial u_i }{ \partial a_i} = \frac{1}{\sigma_{i,i}} \frac{ \partial s(u) }{ \partial u_i}, $$
and the functions $R(z,t^i)$ meet the hypothesis of Theorem \ref{teo:multnl},  the location of the maximum is $a.s.$ unique and 
we conclude that
$$ \esp(Z) = \Big( \Cov(S, X(t^i) ) \Big)_{i=1}^d.$$
\end{proof}

\textbf{Proof of Theorem \ref{theo:gauss1}:}
\begin{proof}
It can be deduced from Theorem \ref{teo:nl}, as Theorem \ref{theo:gauss} was deduced from \ref{teo:multnl}. Alternatively, it is a particular case of Theorem \ref{theo:gauss}, taking $d=1$.
\end{proof}

\appendix
\section{}\label{app}

\textbf{Proof of Proposition \ref{propgauss}:} 
\begin{proof}
	
Define a process $\tilde X$ by 
$$ \tilde X (z) = X^{t^1,...,t^{d}} (z) + \sum_{i=1}^{d} \gamma^i(z) N_i \qquad \forall \, z \in K,    $$
such that all elements satisfy the hypothesis stated in the Proposition \ref{propgauss}, with the functions $\gamma^i$ need to be determined. Note that, by the independence of $X^{t^1,...,t^{k}}$ and $ \{ N_i\}$, $\tilde X$ is a centered gaussian process and its law is completely characterized by its covariances. Then, $\tilde X$ has the same distribution as $X$ if and only if $R(u,v)= \Cov( \tilde X(u), \tilde X(v))$ for all $u,v$ in $K$. Under all the hypothesis, that condition can be rewritten as
\begin{equation}\label{covgamma}
R(z,v) = R_d(z,v) + \sum_{j=1}^{d} \gamma^j(z) \gamma^j(v) \Var(N_j)  \qquad \forall z,v \in K,
\end{equation}
where $R_d$ is defined as in Lemma \ref{lemLaxM} (note that $\Cov(X^{t^1,...,t^{d}} (z), X^{t^1,...,t^{d}} (v))=R_d(z,v)$). Put $v=t^i$ in \eqref{covgamma}; by the assumption $\gamma^j(t^i)= \delta_{i,j}$, it reduces to
$$ R(z,t^i) = R_d(z,t^i) + \gamma^i(z) \Var(N_i) = \gamma^i(z) \Var(N_i),$$
where last equality is due to the fact that $X^{t^1,...,t^d}(t^i)=0$. Since putting $z=t^i$ in the last equation leads to $\Var(N_i)=R(t^i,t^i)$, we conclude that \eqref{def:gamma} is indeed the solution of \eqref{covgamma}, so the equality in distribution is proved. Since the covariance matrix of $(X(t^1,...,X(t^d))$ is diagonal we have that $\gamma^i(t^j)= \frac{R(t^j,t^i)}{R(t^i,t^i)}= \delta_{i,j} $.
\end{proof}

\end{document}